\newtheorem{theorem}{Theorem}[section]
\newtheorem{lemma}[theorem]{Lemma}
\newtheorem{remark}[theorem]{Remark}
\begin{document}
\setcounter{page}{1}
\title{An isoperimetric inequality for a biharmonic Steklov problem}
\author{Shan Li,~ Jing Mao$^{\ast}$}

\date{}
\protect\footnotetext{\!\!\!\!\!\!\!\!\!\!\!\!{$^{\ast}$Corresponding author}\\
{MSC 2020: 35P15, 53C42.}
\\
{ ~~Key Words: Eigenvalues; The biharmonic operator; Steklov-type
eigenvalue problem; Poisson's ratio.}}
\maketitle ~~~\\[-15mm]

\begin{center}
{\footnotesize Faculty of Mathematics and Statistics, \\
Key Laboratory of Applied Mathematics of Hubei Province, \\
Hubei University, Wuhan 430062, China \\
Email: jiner120@163.com }
\end{center}


\begin{abstract}
For the biharmonic Steklov eigenvalue problem considered in this
paper, we show that among all bounded Euclidean domains of class
$C^{1}$ with fixed measure, the ball maximizes the first positive
eigenvalue.
 \end{abstract}

\markright{\sl\hfill S. Li, J. Mao \hfill}

\section{Introduction}  \label{intro}
\renewcommand{\thesection}{\arabic{section}}
\renewcommand{\theequation}{\thesection.\arabic{equation}}
\setcounter{equation}{0}

The study of isoperimetric problems is a hot topic in Differential
Geometry and of course in Spectral Geometry. The famous Faber-Krahn
inequality says that the ball minimizes the first Dirichlet
eigenvalue of the Laplacian among all domains with fixed measure
(see \cite{fa,kr}). While Szeg\H{o} \cite{gs1,gs2} and Weinberg
\cite{hfw} showed that among all domains with fixed measure, the
ball maximizes the first nonzero Neumann eigenvalue of the
Laplacian. Except Dirichlet and Neumann cases, other boundary
conditions for this kind of isoperimetric problems can also be
proposed and similar conclusions can be expected (see, e.g.,
\cite{bro}). These conclusions reveal the dependence of eigenvalues
of the Laplacian on bounded Euclidean domains with fixed volume.
Naturally, one might ask:

$\\$\textbf{Question}. \emph{Can similar spectral isoperimetric
inequalities be obtained for other elliptic operators?}

  $\\$The
answer is of course positive and many facts have been known. In
fact, even for \emph{the biharmonic operator} (also called \emph{the
bi-Laplace operator}), although generally the corresponding
eigenvalue equations (with different boundary conditions) are
\emph{fourth-order} PDEs, some spectral isoperimetric inequalities
can also be achieved. For instance,

\begin{itemize}

\item Lord Rayleigh conjectured:

\begin{itemize}

\item \emph{Among open sets in the Euclidean $n$-space $\mathbb{R}^n$
with the same measure, the ball minimizes the fundamental tone of
the clamped plate problem (i.e., the first eigenvalue of the
biharmonic operator with the Dirichlet and Neumann boundary
conditions).}

\end{itemize} For this conjecture, Nadirashvili \cite{nsn} solved the
case $n=2$ while Ashbaugh and Benguria \cite{ab} solved the case
$n=3$. However, the case $n\geq4$ still remains open.

\item For a bounded domain $\Omega\subset\mathbb{R}^{n}$ with smooth boundary $\partial\Omega$,
$n\geq2$, Chasman \cite{lmc1} considered the following eigenvalue
problem of free plate
\begin{eqnarray} \label{1-2}
 \left\{
\begin{array}{lll}
\Delta^{2}u-\tau\Delta u= \Lambda u \qquad \qquad &\mathrm{in} ~ \Omega,\\
\frac{\partial^{2}u}{\partial\vec{v}^{2}}=0\qquad \qquad
&\mathrm{on} ~
\partial \Omega,\\
\tau\frac{\partial
u}{\partial\vec{v}}-\mathrm{div}_{\partial\Omega}\left({\mathrm{Proj}}_{\partial\Omega}\left[(D^{2}u)\vec{v}\right]\right)-\frac{\partial\Delta
u}{\partial\vec{v}}=0 \qquad \qquad &\mathrm{on} ~
\partial \Omega,
\end{array}
\right.
\end{eqnarray}
where $\tau\in\mathbb{R}$, $\Delta$ is the Lapacian, $\vec{v}$
denotes the outward unit normal vector of $\partial\Omega$,
$\Delta^{2}$ is the biharmonic operator in $\Omega$,
$\mathrm{div}_{\partial\Omega}$ is the surface divergence on
$\partial\Omega$, the operator ${\mathrm{Proj}}_{\partial\Omega}$
projects onto the space tangent to $\partial\Omega$, and $D^{2}u$
denotes the Hessian matrix. Physically, when $n=2$, $\Omega$ is the
shape of a homogeneous, isotropic plate, and the parameter $\tau$ is
the ratio of lateral tension to flexural rigidity of the plate.
Positive $\tau$ corresponds to a plate under tension, while negative
$\tau$ gives us a plate under compression. Chasman \cite[Section
4]{lmc1} proved that if $\tau\geq0$, the operator
$\Delta^{2}-\tau\Delta$ in the boundary value problem\footnote{ BVP
for short.} (\ref{1-2}) has a discrete spectrum and all the
eigenvalues, with finite multiplicity, in this spectrum can be
listed non-decreasingly as follows
\begin{eqnarray*}
0=\Lambda_{1}(\Omega)\leq\Lambda_{2}(\Omega)\leq\Lambda_{3}(\Omega)\leq\cdots\uparrow\infty.
\end{eqnarray*}
She also showed that among all domains with fixed volume, the lowest
nonzero
 eigenvalue $\Lambda_{2}(\Omega)$ for a free plate under
tension (i.e., $\tau>0$) is maximized by a ball (see \cite[Theorem
1]{lmc1}). Later, Chasman
 considered the following eigenvalue problem of free
plate under tension and \emph{with nonzero Poisson's ratio}
\begin{eqnarray} \label{1-3}
 \left\{
\begin{array}{lll}
\Delta^{2}u-\tau\Delta u= \Gamma u \qquad \qquad &\mathrm{in} ~ \Omega,\\
(1-\sigma)\frac{\partial^{2}u}{\partial\vec{v}^{2}}+\sigma\Delta
u=0\qquad \qquad &\mathrm{on} ~
\partial \Omega,\\
\tau\frac{\partial
u}{\partial\vec{v}}-(1-\sigma)\mathrm{div}_{\partial\Omega}\left({\mathrm{Proj}}_{\partial\Omega}\left[(D^{2}u)\vec{v}\right]\right)-\frac{\partial\Delta
u}{\partial\vec{v}}=0 \qquad \qquad &\mathrm{on} ~
\partial \Omega,
\end{array}
\right.
\end{eqnarray}
where $\sigma$ is the Poisson's ratio\footnote{ For the physical
explanation of Poisson's ratio $\sigma$ in the BVP (\ref{1-3}), see
the footnote on the $2^{\mathrm{nd}}$ page of \cite{ms}.} and other
same symbols have the same meanings as those in (\ref{1-2}).
Clearly, if $\sigma=0$ then the BVP (\ref{1-3}) degenerates into the
BVP (\ref{1-2}). Naturally, one might ask ``Whether the
isoperimetric inequality for the first nonzero eigenvalue
$\Lambda_{2}(\Omega)$ in the BVP (\ref{1-2}) can be improved to the
case of the BVP (\ref{1-3}) or not? Is this possible improvement too
direct and without any difficulty?". Chasman \cite{lmc2} gave an
answer to these questions in details. In fact, she (see
\cite[Section 4]{lmc2}) explained that if $\tau\geq0$ and
$\sigma\in(-1/(n-1),1)$, the operator $\Delta^{2}-\tau\Delta$ in the
BVP (\ref{1-3}) has a discrete spectrum and all the eigenvalues,
with finite multiplicity, in this spectrum can be listed
non-decreasingly as follows
\begin{eqnarray*}
0=\Gamma_{1}(\Omega)\leq\Gamma_{2}(\Omega)\leq\Gamma_{3}(\Omega)\leq\cdots\uparrow\infty.
\end{eqnarray*}
She also proved that  the ball with the same volume maximizes
$\Gamma_{2}(\Omega)$ if the free plate is under tension and  one of
the followings holds:

\quad (1) $n=2$ and $\sigma>-51/97$ or
$\tau\geq3(1-\sigma)/(\sigma+1)$,

\quad  (2) $n=3$,

\quad  (3) $n\geq4$ and $\sigma\leq0$ or $\tau\geq(n+2)/2$.

However, numerical and analytic evidences suggest that this fact
should hold for $\tau>0$, $\sigma\in(-1/(n-1),1)$ -- see
\cite[Section 8]{lmc2} for details. Based on this, Chasman
\cite{lmc2} conjectured:

\begin{itemize}
\item \emph{Among all domains with fixed volume, the lowest nonzero
 eigenvalue $\Gamma_{2}(\Omega)$ for a free plate under
tension, with Poisson's ratio $\sigma\in(-1/(n-1),1)$, is maximized
by a ball}.
\end{itemize}
This conjecture is open, and the best partial answers so far are due
to Chasman \cite{lmc1,lmc2}.

\item Buoso and  Provenzano \cite{bp} considered a Steklov-type
eigenvalue problem therein and showed that among all bounded
Euclidean domains of class $C^{1}$ with fixed measure, the ball
maximizes the first positive eigenvalue.

\end{itemize}
Except the above isoperimetric results, much less is known for the
biharmonic operator. The purpose of this paper is trying to get a
\emph{new} isoperimetric inequality for the biharmonic operator in a
BVP having \emph{physical background}.

Throughout this paper, let $\Omega\subset\mathbb{R}^{n}$ be a
bounded domain of class $C^{1}$, $n\geq2$. Inspired by Chasman's
work \cite{lmc1,lmc2}, we consider the following Steklov-type
eigenvalue problem
\begin{eqnarray} \label{1-4}
 \left\{
\begin{array}{lll}
\Delta^{2}u-\tau\Delta u= 0 \qquad \qquad &\mathrm{in} ~ \Omega,\\
(1-\sigma)\frac{\partial^{2}u}{\partial\vec{v}^{2}}+\sigma\Delta
u=0\qquad \qquad &\mathrm{on} ~
\partial \Omega,\\
\tau\frac{\partial
u}{\partial\vec{v}}-(1-\sigma)\mathrm{div}_{\partial\Omega}\left({\mathrm{Proj}}_{\partial\Omega}\left[(D^{2}u)\vec{v}\right]\right)-\frac{\partial\Delta
u}{\partial\vec{v}}=\lambda u \qquad \qquad &\mathrm{on} ~
\partial \Omega,
\end{array}
\right.
\end{eqnarray}
where $\sigma\in\mathbb{R}$ and other same symbols have the same
meanings as those in (\ref{1-2}). For this BVP, if $\tau>0$ and
$\sigma\in(-1/(n-1),1)$, then it only has discrete spectrum and all
the eigenvalues, with finite multiplicity, in this spectrum can be
listed non-decreasingly as follows (see Section \ref{sec2} for
details)
\begin{eqnarray*}
0=\lambda_{1}(\Omega)<\lambda_{2}(\Omega)\leq\lambda_{3}(\Omega)\leq\cdots\uparrow\infty.
\end{eqnarray*}
Moreover, we can prove:
\begin{theorem} \label{maintheorem}
For the Steklov-type problem (\ref{1-4}), if $\tau>0$ and
$\sigma\in(-1/(n-1),1)$, then
$\lambda_{2}(\Omega)\leq\lambda_{2}(\Omega^{\ast})$, where
$\Omega^{\ast}$ is the Euclidean ball having the same measure as
$\Omega$.
\end{theorem}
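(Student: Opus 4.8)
The plan is to combine the variational (min--max) characterization of $\lambda_{2}$ with trial functions transplanted from the ball, a centering argument to enforce admissibility, and a monotonicity/rearrangement comparison that identifies the ball as the extremizer. First I would record the variational characterization. Since the BVP (\ref{1-4}) is the Euler--Lagrange system associated with the quadratic form
\begin{eqnarray*}
Q_{\Omega}[u]=\int_{\Omega}\Big[(1-\sigma)|D^{2}u|^{2}+\sigma(\Delta u)^{2}+\tau|\nabla u|^{2}\Big]\,dx,
\end{eqnarray*}
the hypothesis $\sigma\in(-1/(n-1),1)$ is precisely the range in which $(1-\sigma)|D^{2}u|^{2}+\sigma(\Delta u)^{2}$ is a positive quadratic form in the entries of $D^{2}u$, and $\tau>0$ controls the lower-order term; together they make $Q_{\Omega}$ coercive. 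Constants furnish the eigenvalue $\lambda_{1}=0$, so
\begin{eqnarray*}
\lambda_{2}(\Omega)=\min\Big\{\frac{Q_{\Omega}[u]}{\int_{\partial\Omega}u^{2}\,dS}\ :\ u\in H^{2}(\Omega),\ \int_{\partial\Omega}u\,dS=0\Big\}.
\end{eqnarray*}

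Next I would compute the eigenfunctions on the ball $\Omega^{\ast}$ explicitly. Separating variables via the ansatz $u=f(r)\,Y(\theta)$ reduces $\Delta^{2}u-\tau\Delta u=0$ to a radial ODE whose regular solutions combine a power term with a modified Bessel function; the first positive eigenvalue $\lambda_{2}(\Omega^{\ast})$ is attained on the degree-one spherical harmonics, so the corresponding eigenfunctions take the form $u^{\ast}_{i}(x)=g(|x|)\,x_{i}/|x|$, $i=1,\dots,n$, for an explicit radial profile $g$. I would record the sign and monotonicity properties of $g,g',g''$ on $[0,R]$, since these drive the final comparison.

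Then I would build competitors on the general $\Omega$ by transplanting this profile: set $u_{i}(x)=g(|x|)\,x_{i}/|x|$, extending $g$ past $r=R$ so that the extended profile and its derivatives retain the needed monotonicity (since $\Omega$ need not sit inside $\Omega^{\ast}$). To make the $u_{i}$ admissible I need $\int_{\partial\Omega}u_{i}\,dS=0$ for every $i$; this I would secure by translating $\Omega$ so that its $g$-weighted boundary center of mass vanishes, the existence of such a translation following from a Brouwer-type degree argument. Using each $u_{i}$ as a competitor and summing the $n$ inequalities numerator-against-denominator, the anisotropic factors collapse via $\sum_{i}(x_{i}/|x|)^{2}=1$, turning $\sum_{i}u_{i}^{2}=g^{2}$ and rendering $\sum_{i}Q_{\Omega}[u_{i}]$ a purely radial integrand, whence
\begin{eqnarray*}
\lambda_{2}(\Omega)\le\frac{\sum_{i=1}^{n}Q_{\Omega}[u_{i}]}{\sum_{i=1}^{n}\int_{\partial\Omega}u_{i}^{2}\,dS}=\frac{\int_{\Omega}F(|x|)\,dx}{\int_{\partial\Omega}g(|x|)^{2}\,dS}
\end{eqnarray*}
for a radial density $F$ assembled from $g,g',g''$.

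The hard part, and the final step, is the rearrangement comparison showing this radial quotient is maximized at $\Omega=\Omega^{\ast}$: one must control the volume numerator and the boundary denominator simultaneously, so that redistributing the mass of $\Omega$ to the equal-volume ball raises the quotient to its value on the sphere. This hinges on proving a monotonicity lemma for $F$, which is exactly where the hypotheses $\tau>0$ and $\sigma\in(-1/(n-1),1)$ are consumed: one verifies a differential inequality for the specific combination of $g,g',g''$ arising in $(1-\sigma)|D^{2}u_{i}|^{2}+\sigma(\Delta u_{i})^{2}+\tau|\nabla u_{i}|^{2}$, using the Bessel ODE satisfied by $g$ to eliminate higher derivatives. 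I expect this monotonicity lemma --- rather than the variational or centering machinery --- to be the main technical obstacle, since the Hessian term $|D^{2}u_{i}|^{2}$ produces several competing contributions whose sign must be tracked uniformly over the admissible range of $(\tau,\sigma)$.
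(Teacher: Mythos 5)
Your overall architecture --- transplant the first eigenfunctions of the ball, center them to make them admissible, sum the $n$ Rayleigh quotients, and compare the resulting radial quotient with its value on the ball --- is the same skeleton as the paper's proof, and your variational setup and centering step are sound. One correction before the main issue: if you carry out your ball computation you will find that for degree-one spherical harmonics the Bessel coefficient vanishes; in the paper's notation (Theorem \ref{theorem3-1}) $B_{l}\propto(1-\sigma)l(1-l)=0$ for $l=1$, so the radial profile is $g(r)=r$ and the first positive eigenvalue of the unit ball is $\lambda_{2}=\tau$ with eigenspace spanned by the coordinate functions $x_{1},\dots,x_{n}$ (Theorem \ref{theorem3-2}). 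Hence your transplanted competitors are just centered linear functions: $D^{2}u_{i}=0$, $\Delta u_{i}=0$, $|\nabla u_{i}|^{2}=1$, the numerator density collapses to the constant $n\tau$, and the entire Bessel-ODE/monotonicity machinery you flag as "the main technical obstacle" is vacuous; likewise no Brouwer degree argument is needed, since for linear functions admissibility is achieved by translating to the boundary centroid, exactly as the paper does.

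The genuine gap is in your final step. After the collapse, your quotient is $n\tau|\Omega|\big/\int_{\partial\Omega}|x|^{2}\,dS$, and since $|\Omega|=|\Omega^{\ast}|$ the whole proof reduces to the single inequality $\int_{\partial\Omega}|x|^{2}\,dS\ \ge\ \int_{\partial\Omega^{\ast}}|x|^{2}\,dS$, i.e., that among equal-volume domains the centered ball \emph{minimizes} the second boundary moment. Your proposed tool --- a rearrangement/monotonicity comparison of a radial density --- is the method of Weinberger and Chasman for Neumann-type problems, where the denominator is a volume integral $\int_{\Omega}u^{2}\,dx$ and symmetrization applies; it says nothing about a boundary integral, which is precisely what distinguishes this Steklov problem from the free plate. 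The needed boundary-moment inequality is a nontrivial weighted isoperimetric-type result that the paper imports as Lemma \ref{lemma4-1} (Brasco--De Philippis--Ruffini \cite{bpr}), in quantitative form, which is also what produces the stability factor $\bigl(1-\delta_{n}\mathcal{A}(\Omega)^{2}\bigr)$ in Theorem \ref{theorem4-3}. Your proposal neither proves nor cites this ingredient, and no monotonicity lemma for the (constant) numerator density can substitute for it. (The remaining difference is cosmetic: the paper organizes the summation step through a Hile--Xu-type bound on $\sum_{l}1/\lambda_{l}$, Lemma \ref{lemma4-2}, which is equivalent to your sum-of-numerators over sum-of-denominators estimate.)
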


\begin{remark}
\rm{ (1) Chasman's experience \cite{lmc2} shows that for the free
plate problem of the biharmonic operator, there exists the essential
difference between the nonzero Poisson's ratio case and the null
case, i.e., the spectral isoperimetric result of the BVP (\ref{1-2})
cannot be simply extended to the case of BVP (\ref{1-3}). This is
exactly the motivation why we consider the boundary conditions given
in (\ref{1-4}) for the equation $\Delta^{2}u-\tau\Delta u= 0$ in
$\Omega$. \\
(2) Clearly, if the parameter $\sigma=0$, then the BVP (\ref{1-4})
becomes the one considered in \cite{bp}, and correspondingly, the
isoperimetric inequality shown in Theorem \ref{maintheorem} here
degenerates into the one obtained in \cite[Corollary 5.20]{bp}.
Besides, one can see \cite[Section 2]{bp} for the physical
background of the BVP (\ref{1-4}) in the case $\sigma=0$ and $n=2$, where it was used to describe the transverse vibrations
of a thin plate in the theory of linear elasticity. \\
 (3) As shown in Subsection \ref{subs2-3}, we would like to show
 that if additionally $\Omega$ is of
class $C^{2}$, then eigenvalues and eigenfunctions of the
Steklov-type problem
 (\ref{1-4}) can be converged by eigenvalues and eigenfunctions of
 the eigenvalue problem of the operator $\Delta^{2}-\tau\Delta$
 subject to Neumann boundary conditions. This fact gives a further
 interpretation of problem
 (\ref{1-4}) as the equation of a free
vibrating plate (under tension and with nonzero Poisson's ratio)
whose mass is concentrated at the boundary in the case of domains of
class $C^{2}$.
 \\
(4) The BVP (\ref{1-4}) has already been considered in \cite{dmwxy}
by the corresponding author, Prof. J. Mao, with his collaborators,
and a lower bound for the sums of the reciprocals of the first $n$
nonzero eigenvalues $\lambda_{i}(\Omega)$ has been obtained (see
\cite[Theorem 1.5]{dmwxy}).
 }
\end{remark}

\section{Analysis of the spectrum: characterization and asymptotic
behavior} \label{sec2}
\renewcommand{\thesection}{\arabic{section}}
\renewcommand{\theequation}{\thesection.\arabic{equation}}
\setcounter{equation}{0}

\subsection{Characterization}\label{subsec2-1}

First, we would like to show that the boundary conditions in
(\ref{1-4}) are reasonable. In order to explain clearly, we consider
a slight more general version of the problem (\ref{1-4}) as follows
\begin{eqnarray} \label{2-1-1}
 \left\{
\begin{array}{lll}
\Delta^{2}u-\tau\Delta u= 0 \qquad \qquad &\mathrm{in} ~ \Omega,\\
(1-\sigma)\frac{\partial^{2}u}{\partial\vec{v}^{2}}+\sigma\Delta
u=0\qquad \qquad &\mathrm{on} ~
\partial \Omega,\\
\tau\frac{\partial
u}{\partial\vec{v}}-(1-\sigma)\mathrm{div}_{\partial\Omega}\left({\mathrm{Proj}}_{\partial\Omega}\left[(D^{2}u)\vec{v}\right]\right)-\frac{\partial\Delta
u}{\partial\vec{v}}=\lambda\rho u \qquad \qquad &\mathrm{on} ~
\partial \Omega,
\end{array}
\right.
\end{eqnarray}
where the positive weight $\rho\in L^{\infty}(\partial\Omega)$
denotes a mass density.
 Now, consider the weak eigenvalue equation
for eigenfunction $u$ with the eigenvalue $\lambda\in \mathbb{R}$
and choose some test function $\phi\in H^{2}(\Omega)$, one has
\begin{eqnarray} \label{weq}
\int_{\Omega}\left[(1-\sigma)D^{2}u:D^{2}\phi+\sigma\Delta u \Delta
\phi+\tau \nabla u\cdot \nabla \phi
\right]dx=\lambda\int_{\partial\Omega}\rho u\phi dS,
\end{eqnarray}
where
\begin{equation*}
D^{2}u:D^{2}\phi=\sum\limits^{n}_{i,j=1}\frac{\partial^{2}u}{\partial
x_{i}\partial x_{j}}\frac{\partial^{2}\phi}{\partial x_{i}\partial
x_{j}},
\end{equation*}
 with $\{x_{i}\}_{1\leq i\leq n}$ Cartesian
coordinates of $\mathbb{R}^n$, denotes the Frobenius product,
$\nabla$ is the gradient operator, and $dx$, $dS$ are volume
densities of $\Omega$ and $\partial\Omega$ respectively. By the
divergence theorem, one has
\begin{eqnarray} \label{2}
\int_{\Omega}\nabla u\cdot \nabla\phi
dx=\int_{\partial\Omega}\phi\frac{\partial u}{\partial\vec{v}}
dS-\int_{\Omega}\phi(\Delta u)dx,
\end{eqnarray}
which implies the Hessian term becomes
\begin{eqnarray*}
\begin{split}
\int_{\Omega}(1-\sigma)D^{2}u:D^{2}\phi
dx=&(1-\sigma)\sum\limits^{n}_{i,j=1}\int_{\Omega}\frac{\partial^{2}u}{\partial
x_{i}\partial x_{j}}\frac{\partial^{2}\phi}{\partial x_{i}\partial
x_{j}}dx
=(1-\sigma)\sum\limits^{n}_{j=1}\int_{\Omega}D(u_{x_{j}})\cdot D(\phi_{x_{j}})dx\\
=&(1-\sigma)\left[\sum\limits^{n}_{j=1}\int_{\partial\Omega}\phi_{x_{j}}\frac{\partial(u_{x_{j}})}{\partial\vec{v}}dS-\sum\limits^{n}_{j=1}\int_{\Omega}\Delta(u_{x_{j}})\cdot \phi_{x_{j}}dx\right]\\
=&(1-\sigma)\Bigg{\{}\int_{\partial\Omega}\left[\frac{\partial
\phi}{\partial\vec{v}}\cdot\frac{\partial^{2}u}{\partial\vec{v}^{2}}-\phi\cdot
\mathrm{div}_{\partial\Omega}(D^{2}u\cdot
\vec{v})-\phi\frac{\partial(\Delta
u)}{\partial\vec{v}}\right]dS\\
&\qquad +\int_{\Omega}\phi\cdot \Delta^{2}udx\Bigg{\}}.
\end{split}
\end{eqnarray*}
Define the tangential divergence $\mathrm{div}_{\partial\Omega}$ of
a vector field $F$ as $\mathrm{div}_{\partial\Omega}F=\mathrm{div}
F|_{\partial\Omega}-(DF\cdot\vec{v})\cdot\vec{v}$. Applying the
divergence theorem twice, we can get
\begin{eqnarray*}
\begin{split}
\int_{\Omega}\sigma\Delta u\cdot\Delta \phi dx=&\sigma\int_{\partial\Omega}\Delta u\frac{\partial \phi}{\partial\vec{v}}dS-\sigma\int_{\Omega}D(\Delta u)\cdot D\phi dx\\
=&\sigma\int_{\partial\Omega}\left[\Delta u\frac{\partial
\phi}{\partial\vec{v}}-\phi\frac{\partial(\Delta
u)}{\partial\vec{v}}\right]dS+\sigma\int_{\Omega}\Delta^{2}u\cdot
\phi dx.
\end{split}
\end{eqnarray*}
Therefore, the weak eigenvalue equation (\ref{weq}) can be written
as
\begin{eqnarray*}
\begin{split}
&\int_{\Omega}\phi(\Delta^{2}u-\tau\Delta u)dx+\int_{\partial\Omega}\frac{\partial \phi}{\partial\vec{v}}\left[(1-\sigma)\frac{\partial^{2}u}{\partial \vec{v}^{2}}+\sigma\Delta u\right]dS+\\
&\qquad \int_{\partial\Omega}\phi\left[\tau\frac{\partial
u}{\partial\vec{v}}-\frac{\partial\Delta
u}{\partial\vec{v}}-(1-\sigma)\mathrm{div}_{\partial\Omega}(D^{2}\cdot
\vec{v})-\lambda\rho u\right]dS=0,
\end{split}
\end{eqnarray*}
which holds for all $\phi\in H^{2}(\Omega)$ and, of course, implies
the BVP (\ref{2-1-1}) directly.

\subsection{Analysis of the spectrum of the Steklov-type eigenvalue problem (\ref{2-1-1})}

Let
\begin{eqnarray}  \label{add-1-1}
\rho\in\mathcal{R}^{S}:=\left\{\rho\in
L^{\infty}(\partial\Omega)|\mathrm{essinf}_{x\in\partial\Omega}\rho(x)>0\right\}.
\end{eqnarray}
As shown in Subsection \ref{subsec2-1}, the weak formulation of BVP
(\ref{2-1-1}) is given by (\ref{weq}). Clearly, $\lambda=0$ is an
eigenvalue whose eigenfunctions are nonzero constant functions.
Hence, we need to consider the problem in the quotient space
$H^{2}(\Omega)/\mathbb{R}$. Let $\mathcal{J}^{S}_{\rho}$ be a
continuous embedding of $L^{2}(\partial\Omega)$ into
$H^{2}(\Omega)'$ defined by
 \begin{eqnarray*}
\mathcal{J}^{S}_{\rho}[u][\phi]:=\int_{\partial\Omega}\rho u\phi
dS,\qquad \forall u\in L^{2}(\partial\Omega),\phi\in H^{2}(\Omega).
\end{eqnarray*}
 Set
\begin{equation} \label{def-s1}
 H^{2,S}_{\rho}(\Omega):=\left\{ u\in H^{2}(\Omega)\Bigg{|}\int_{\partial\Omega}\rho u dS=0\right\}.
\end{equation}
In  $H^{2}(\Omega)$, consider the following bilinear form
\begin{eqnarray} \label{5}
\langle
u,\phi\rangle=\int_{\Omega}\left[(1-\sigma)D^{2}u:D^{2}\phi+\sigma\Delta
u\cdot\Delta\phi+\tau\nabla u\cdot\nabla\phi\right] dx,
\end{eqnarray}
which, by applying Poincar\'{e}-Wirtinger inequality, turns out to
be a scalar product on $H^{2,S}_{\rho}(\Omega)$. Now, we make an
agreement as follows:
\begin{itemize}

\item \emph{In the sequel, we shall treat $H^{2,S}_{\rho}(\Omega)$ as
the functional space defined by (\ref{def-s1}) and endowed with the
form (\ref{5}).}

\end{itemize}
Define a set $F(\Omega)$ as $F(\Omega):=\{G\in
H^{2}(\Omega)'|G[1]=0\}$. Now, one can define an operator
$\mathcal{P}^{S}_{\rho}:H^{2,S}_{\rho}\mapsto F(\Omega)$ given by
\begin{eqnarray} \label{6}
\mathcal{P}^{S}_{\rho}[u][v]:=\int_{\Omega}\left[(1-\sigma)D^{2}u:D^{2}v+\sigma\Delta
u\cdot\Delta v+\tau\nabla u\cdot\nabla v\right]dx,
\end{eqnarray}
where $u\in H^{2,S}_{\rho}(\Omega)$, $v\in H^{2}(\Omega)$. It is not
hard to know that $\mathcal{P}^{S}_{\rho}$ is a homeomorphism.
Define an operator $\pi^{S}_{\rho}:H^{2}(\Omega)\mapsto
H^{2,S}_{\rho}$ as follows
\begin{eqnarray} \label{7-1}
\pi^{S}_{\rho}[u]:=u-\frac{\int_{\partial\Omega}\rho
udS}{\int_{\partial\Omega}\rho dS}.
\end{eqnarray}
Consider the space $H^{2}(\Omega)/\mathbb{R}$ equipped with the
bilinear form induced by (\ref{5}), which is exactly a Hilbert
space, and then one can define a map
$\pi^{\sharp,S}_{\rho}:H^{2}(\Omega)/\mathbb{R}\mapsto
H^{2,S}_{\rho}(\Omega)$ determined by the equality
$\pi^{S}_{\rho}=\pi^{\sharp,S}_{\rho}\circ p$, with $p$ a canonical
projection of $H^{2}(\Omega)$ onto $H^{2}(\Omega)/\mathbb{R}$. It is
not hard to know that $\pi^{\sharp,S}_{\rho}$ is a homeomorphism. We
can define a differential operator $T^{S}_{\rho}$ on
$H^{2}(\Omega)/\mathbb{R}$ as follows
\begin{eqnarray} \label{8}
T^{S}_{\rho}:=(\pi^{\sharp,S}_{\rho})^{-1}\circ(\mathcal{P}^{S}_{\rho})^{-1}\circ\mathcal{J}^{S}_{\rho}\circ
\mathrm{Tr}\circ\pi^{\sharp,S}_{\rho},
\end{eqnarray}
with $\mathrm{Tr}$ the trace operator acting from $H^{2}(\Omega)$ to
$L^{2}(\partial\Omega)$. The operator $T^{S}_{\rho}$ can be shown to
be a nonnegative compact self-adjoint in $H^{2}(\Omega)/\mathbb{R}$
under the suitable assumption. However, before that, we need the
following fact:

\begin{itemize}
\item \emph{For any function $u\in H^{2}(\Omega)$, we have the sharp bound
$(\Delta u)^{2} \leq n|D^{2}u|^{2}$.}
\end{itemize}
This fact can be directly obtained by the Cauchy-Schwarz inequality.
Besides, for the operator $T^{S}_{\rho}$, it is easy to find the
following fact:

\begin{itemize}
\item The pair $(\lambda,u)$ of set $(\mathbb{R}\setminus\{0\})\times
(H^{2,S}_{\rho}(\Omega)\setminus\{0\})$ satisfies
 (\ref{weq}) if and only if $\lambda\neq0$ and the pair $(\lambda^{-1},p[u])$ of set $\mathbb{R}\times((H^{2}(\Omega)/\mathbb{R})\setminus\{0\}) $
 satisfies the equation $
   \lambda^{-1}p[u]=T^{S}_{\rho}p[u]$.
\end{itemize}
Now, we have:

\begin{theorem} \label{maintheorem-1}
Assume that $\tau>0$, $\sigma\in(-1/(n-1),1)$. The operator
$T^{S}_{\rho}$ defined by (\ref{8}) is a non-negative compact
self-adjoint operator in $H^{2}(\Omega)/\mathbb{R}$, and its
eigenvalues are the reciprocals of the positive eigenvalues of
problem (\ref{weq}). In particular, the set of eigenvalues of
problem (\ref{weq}) consists the image of a sequence contained in
$[0,\infty)$ and increasing to $+\infty$. Besides, the multiplicity
of each eigenvalue is finite.
\end{theorem}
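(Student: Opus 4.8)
The plan is to establish the three operator-theoretic properties of $T^{S}_{\rho}$ in turn---self-adjointness, non-negativity, and compactness---and then to read off the spectral description from the abstract spectral theorem for compact self-adjoint operators, translating back to problem (\ref{weq}) through the equivalence recorded just before the statement. Throughout I work on the space $H^{2}(\Omega)/\mathbb{R}$ endowed with the scalar product induced by (\ref{5}), which is a Hilbert space as already noted. The role of the hypotheses enters here: the condition $\sigma\in(-1/(n-1),1)$ together with the sharp bound $(\Delta u)^{2}\leq n|D^{2}u|^{2}$ forces $(1-\sigma)|D^{2}u|^{2}+\sigma(\Delta u)^{2}\geq(1+\sigma(n-1))|D^{2}u|^{2}\geq 0$, while $\tau>0$ controls the gradient term, so that together with the Poincar\'{e}--Wirtinger inequality the form (\ref{5}) is positive-definite on the quotient; this is exactly what makes $(\mathcal{P}^{S}_{\rho})^{-1}$ and $(\pi^{\sharp,S}_{\rho})^{-1}$ well-defined homeomorphisms entering (\ref{8}).

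For self-adjointness and non-negativity I unwind the definition (\ref{8}) on representatives chosen in $H^{2,S}_{\rho}(\Omega)$. Given such $u,v$, set $\psi=(\mathcal{P}^{S}_{\rho})^{-1}\mathcal{J}^{S}_{\rho}\,\mathrm{Tr}(u)\in H^{2,S}_{\rho}(\Omega)$; by the defining relation $\mathcal{P}^{S}_{\rho}[\psi]=\mathcal{J}^{S}_{\rho}[\mathrm{Tr}\,u]$ and the formulas (\ref{6}), (\ref{5}), this element is characterized by
\begin{equation*}
\langle\psi,w\rangle=\int_{\partial\Omega}\rho\,u\,w\,dS\qquad\text{for all }w\in H^{2}(\Omega).
\end{equation*}
Since (\ref{5}) annihilates constants it descends to $H^{2}(\Omega)/\mathbb{R}$, and because $\psi\in H^{2,S}_{\rho}(\Omega)$ one has $T^{S}_{\rho}p[u]=p[\psi]$, whence
\begin{equation*}
\big\langle T^{S}_{\rho}p[u],\,p[v]\big\rangle=\langle\psi,v\rangle=\int_{\partial\Omega}\rho\,u\,v\,dS.
\end{equation*}
The right-hand side is manifestly symmetric in $u$ and $v$, so $T^{S}_{\rho}$ is self-adjoint; taking $v=u$ and using $\rho>0$ (recall $\rho\in\mathcal{R}^{S}$) gives $\langle T^{S}_{\rho}p[u],p[u]\rangle=\int_{\partial\Omega}\rho\,u^{2}\,dS\geq 0$, so $T^{S}_{\rho}$ is non-negative.

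For compactness I isolate the only non-invertible factor in the composition (\ref{8}). The maps $\pi^{\sharp,S}_{\rho}$, $\mathcal{J}^{S}_{\rho}$, $(\mathcal{P}^{S}_{\rho})^{-1}$ and $(\pi^{\sharp,S}_{\rho})^{-1}$ are all bounded, being homeomorphisms or a continuous embedding as recorded above; hence the compactness of $T^{S}_{\rho}$ reduces to that of the trace operator $\mathrm{Tr}\colon H^{2}(\Omega)\to L^{2}(\partial\Omega)$. Since $\Omega$ is a bounded domain of class $C^{1}$, the trace embedding $H^{2}(\Omega)\hookrightarrow L^{2}(\partial\Omega)$ is compact, and a composition of bounded operators with a compact one is compact, so $T^{S}_{\rho}$ is compact. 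I expect this to be the main obstacle, as it is the single point where the $C^{1}$-regularity of $\partial\Omega$ genuinely enters, through the compact trace theorem; the rest is bookkeeping with the homeomorphisms of (\ref{8}).

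Finally, $T^{S}_{\rho}$ being a non-negative compact self-adjoint operator on the Hilbert space $H^{2}(\Omega)/\mathbb{R}$, the spectral theorem provides an orthonormal system of eigenfunctions whose nonzero eigenvalues $\mu_{1}\geq\mu_{2}\geq\cdots>0$ have finite multiplicity and accumulate only at $0$. By the equivalence recorded before the statement, a pair $(\lambda,u)$ solves (\ref{weq}) with $\lambda\neq 0$ if and only if $\mu=\lambda^{-1}$ is an eigenvalue of $T^{S}_{\rho}$ with eigenfunction $p[u]$. Consequently the positive eigenvalues of (\ref{weq}) are exactly the reciprocals $\lambda_{k}=\mu_{k}^{-1}$, carry the same finite multiplicities, and---being reciprocals of a non-increasing null sequence---form a non-decreasing sequence increasing to $+\infty$. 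Adjoining the eigenvalue $\lambda=0$, whose eigenfunctions are the nonzero constants, yields the asserted non-decreasing list accumulating only at infinity.
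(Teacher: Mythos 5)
Your proposal follows essentially the same route as the paper's proof: the key identity $\langle T^{S}_{\rho}p[u],p[v]\rangle=\int_{\partial\Omega}\rho\,u\,v\,dS$ for representatives $u,v\in H^{2,S}_{\rho}(\Omega)$ (the paper derives it via an orthogonal-decomposition computation, you via the defining relation of $(\mathcal{P}^{S}_{\rho})^{-1}$), compactness reduced to the compactness of the trace operator, and the spectral theorem for compact self-adjoint operators to read off the spectrum of (\ref{weq}). Your treatment of non-negativity, via $\rho>0$ and the boundary integral, is in fact more direct than the paper's, which argues through positivity of the form defining $\mathcal{P}^{S}_{\rho}$.

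One slip should be corrected. The pointwise bound you invoke,
\begin{equation*}
(1-\sigma)|D^{2}u|^{2}+\sigma(\Delta u)^{2}\geq\bigl(1+\sigma(n-1)\bigr)|D^{2}u|^{2},
\end{equation*}
is false for $\sigma\in(0,1)$: it is equivalent to $\sigma\bigl((\Delta u)^{2}-n|D^{2}u|^{2}\bigr)\geq0$, and since $(\Delta u)^{2}\leq n|D^{2}u|^{2}$ the inequality goes the wrong way when $\sigma>0$ (take $u=x_{1}^{2}-x_{2}^{2}$ in $\mathbb{R}^{2}$, where $\Delta u=0$ but $|D^{2}u|^{2}=8$). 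The fact you actually need survives with a two-case argument: for $\sigma\in[0,1)$ drop the term $\sigma(\Delta u)^{2}\geq0$ to get the lower bound $(1-\sigma)|D^{2}u|^{2}$, and for $\sigma\in(-1/(n-1),0)$ use $(\Delta u)^{2}\leq n|D^{2}u|^{2}$ to get $(1+\sigma(n-1))|D^{2}u|^{2}$; in either case
\begin{equation*}
(1-\sigma)|D^{2}u|^{2}+\sigma(\Delta u)^{2}\geq\min\{1-\sigma,\;1+\sigma(n-1)\}\,|D^{2}u|^{2}\geq0,
\end{equation*}
which, together with $\tau>0$ and the Poincar\'{e}--Wirtinger inequality, gives the positive-definiteness of the form (\ref{5}) on the quotient that your argument (and the invertibility of $\mathcal{P}^{S}_{\rho}$) requires. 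Nothing else in your proof depends on the erroneous constant, so with this correction the argument is complete.
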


\begin{proof}
Let $u\in H^{2}(\Omega)/\mathbb{R}$. By  the orthogonal
decomposition, we have
 \begin{eqnarray*}
u=\pi^{\sharp,S}_{\rho}[u]+(\pi^{\sharp,S}_{\rho}[u])^{\perp},
\end{eqnarray*}
where $\pi^{\sharp,S}_{\rho}[u]\in H^{2,S}_{\rho}$,
$(\pi^{\sharp,S}_{\rho}[u])^{\perp}\in (H^{2,S}_{\rho})^{\perp}$.
So, for $v\in H^{2}(\Omega)/\mathbb{R}$, one has
\begin{equation*}
\begin{split}
&\langle\pi^{\sharp,S}_{\rho}[u],v\rangle_{H^{2}(\Omega)/\mathbb{R}}\\
=&\langle\pi^{\sharp,S}_{\rho}[u],\pi^{\sharp,S}_{\rho}[v]+(\pi^{\sharp,S}_{\rho}[v])^{\perp}\rangle_{H^{2}(\Omega)/\mathbb{R}}\\
=&\langle\pi^{\sharp,S}_{\rho}[u],\pi^{\sharp,S}_{\rho}[v]\rangle_{H^{2}(\Omega)/\mathbb{R}}\\
=&\langle\pi^{\sharp,S}_{\rho}[u]+(\pi^{\sharp,S}_{\rho}[u])^{\perp},\pi^{\sharp,S}_{\rho}[v]\rangle_{H^{2}(\Omega)/\mathbb{R}}\\
=&\langle
u,\pi^{\sharp,S}_{\rho}[v]\rangle_{H^{2}(\Omega)/\mathbb{R}},
\end{split}
\end{equation*}
where, of course,
$\langle\cdot,\cdot\rangle_{H^{2}(\Omega)/\mathbb{R}}$ stands for
the bilinear form in the quotient space $H^{2}(\Omega)/\mathbb{R}$
induced by (\ref{5}). So, we can deduce that
\begin{equation*}
\begin{split}
\langle T^{S}_{\rho}u,v\rangle_{H^{2}(\Omega)/\mathbb{R}}=&\left\langle(\pi^{\sharp,S}_{\rho})^{-1}\circ(\mathcal{P}^{S}_{\rho})^{-1}\circ\mathcal{J}^{S}_{\rho}\circ\mathrm{Tr}\circ\pi^{\sharp,S}_{\rho}u,v\right\rangle_{H^{2}(\Omega)/\mathbb{R}}\\
=&\langle(\mathcal{P}^{S}_{\rho})^{-1}\circ\mathcal{J}^{S}_{\rho}\circ \mathrm{Tr}\circ\pi^{\sharp,S}_{\rho}u,(\pi^{\sharp,S}_{\rho})^{-1}v\rangle_{H^{2}(\Omega)/\mathbb{R}}\\
=&\lambda^{-1}\langle\mathrm{Tr}\circ\pi^{\sharp,S}_{\rho}u,\pi^{\sharp,S}_{\rho}v\rangle_{H^{2}(\Omega)/\mathbb{R}}\\
=&\mathcal{J}^{S}_{\rho}[\mathrm{Tr}\circ\pi^{\sharp,S}_{\rho}u][\pi^{\sharp,S}_{\rho}v]\\
=&\int_{\partial\Omega}\rho\pi^{\sharp,S}u\cdot\pi^{\sharp,S}vdS.
\end{split}
\end{equation*}
Similarly, one can obtain
 \begin{eqnarray*}
\langle
u,T^{S}_{\rho}v\rangle_{H^{2}(\Omega)/\mathbb{R}}=\int_{\partial\Omega}\rho\pi^{\sharp,S}u\cdot\pi^{\sharp,S}vdS.
 \end{eqnarray*}
So,
 \begin{eqnarray*}
\langle u,T^{S}_{\rho}v\rangle_{H^{2}(\Omega)/\mathbb{R}}=\langle
T^{S}_{\rho}u,v\rangle_{H^{2}(\Omega)/\mathbb{R}},
 \end{eqnarray*}
 which implies the self-adjointness of the operator $T^{S}_{\rho}$
 directly.

 The compactness of $T^{S}_{\rho}$ can be obtained from
 the fact that the trace operator $\mathrm{Tr}:H^{1}(\Omega)\mapsto
 L^{2}(\partial\Omega)$ is compact. Since $\sigma\in(-1/(n-1),1)$
 and $(\Delta u)^{2}\leq n|D^{2}u|^{2}$, one knows $(1-\sigma)|D^{2}u|^{2}+\sigma(\Delta
 u)^{2}>0$, which, together with $\tau>0$, implies the
 nonnegativity of the operator $\mathcal{P}^{S}_{\rho}$
 defined by (\ref{6}). Naturally, the nonnegativity of the operator $T^{S}_{\rho}$ follows
 directly.
 This completes the proof of Theorem \ref{maintheorem-1}.
\end{proof}

Applying Theorem \ref{maintheorem-1} directly, one knows that if
$\tau>0$, $\sigma\in(-1/(n-1),1)$, then the problem (\ref{weq}) only
has the discrete spectrum and all its elements (i.e., eigenvalues)
can be listed non-decreasingly as follows
\begin{eqnarray*}
0=\lambda_{1}<\lambda_{2}\leq\lambda_{3}\leq
\cdots\leq\lambda_{n}\uparrow+\infty.
\end{eqnarray*}
In fact, it is easy to check that $\lambda=0$ is an eigenvalue of
the problem (\ref{weq}) with constant functions as its
eigenfunctions. In contrast, suppose now $u$ is an eigenfunction of
the eigenvalue $\lambda=0$, and then we have
\begin{eqnarray*}
\int_{\Omega}\left[(1-\sigma)|D^{2}u|^{2}+\sigma|\Delta
u|^{2}+\tau|\nabla u|^{2}\right]dx=0,
\end{eqnarray*}
where
$|D^{2}u|^{2}:=\sum^{n}_{i,j=1}\left(\frac{\partial^{2}u}{\partial
x_{i}\partial x_{j}}\right)^{2}$. So, we have $\nabla u=0$, which
implies $u$ is constant. Moreover, by Courant's principle, it is not
hard to know that the eigenvalue $\lambda=0$ should have
multiplicity $1$.

As inspired by the free plate problem with nonzero Poisson's ratio
discussed by Chasman \cite{lmc2}, we know that the bilinear form
defined by (\ref{6}) might be coercive for $\sigma$ not in
$(-1/(n-1),1)$ if imposing some restrictions on $\tau$. For
instance, if $\tau>0$ and $\sigma=1$, then the bilinear form defined
(\ref{6}) becomes
 \begin{eqnarray*}
\int_{\Omega}\left[\Delta u\cdot\Delta v+\tau\nabla u\cdot\nabla
v\right]dx,
 \end{eqnarray*}
which is obviously coercive. Besides, in this setting, the problem
(\ref{weq}) degenerates into
\begin{eqnarray*}
\int_{\Omega}\left[\Delta u \Delta \phi+\tau \nabla u\cdot \nabla
\phi \right]dx=\lambda\int_{\partial\Omega}\rho u\phi dS,
\end{eqnarray*}
with its strengthened version
\begin{eqnarray*}
 \left\{
\begin{array}{lll}
\Delta^{2}u-\tau\Delta u= 0 \qquad \qquad &\mathrm{in} ~ \Omega,\\
\Delta u=0\qquad \qquad &\mathrm{on} ~
\partial \Omega,\\
\tau\frac{\partial u}{\partial\vec{v}}-\frac{\partial\Delta
u}{\partial\vec{v}}=\lambda\rho u \qquad \qquad &\mathrm{on} ~
\partial \Omega.
\end{array}
\right.
\end{eqnarray*}
Following almost the same argument, it is easy to know that for the
above eigenvalue problem, the operator $\Delta^{2}-\tau\Delta$ only
has discrete spectrum (with nonnegative eigenvalues inside) provided
$\tau>0$. Although, in the situation $\tau>0$ and $\sigma=1$, for
the problem (\ref{weq}), one might also have similar conclusions to
Theorem \ref{maintheorem-1}, the boundary conditions lose the
physical background, which is not the case we really want to
discuss. Besides, note that if $\tau=0$ and $\sigma=1$, then all
harmonic functions in $H^{2}$ are eigenfunctions with eigenvalue
zero to the problem (\ref{weq}), and of course, one has an
eigenvalue of infinite multiplicity. Based on these reasons, we
prefer to study the spectral properties of the problem (\ref{weq})
under the constraint that $\tau>0$, $\sigma\in(-1/(n-1),1)$.
Besides, in this constraint, by means of variational principle, the
first nonzero eigenvalue $\lambda_{2}$ of the problem (\ref{weq})
can be characterized as follows
\begin{eqnarray*}
\lambda_{2}=\min\left\{\frac{\int_{\Omega}\left[(1-\sigma)|D^{2}u|^{2}+\sigma|\Delta
u|^{2}+\tau|\nabla u|^{2}\right]dx}{\int_{\partial\Omega}\rho u^{2}
dS}\Bigg{|}0\neq u\in H^{2}(\Omega),\int_{\partial\Omega}\rho u
dS=0\right\}.
\end{eqnarray*}

\subsection{Asymptotic behavior} \label{subs2-3}

Consider the following eigenvalue problem of the biharmonic operator
with the Neumann boundary conditions
\begin{eqnarray} \label{2-1-1}
 \left\{
\begin{array}{lll}
\Delta^{2}u-\tau\Delta u=\lambda\rho u \qquad \qquad &\mathrm{in} ~ \Omega,\\
(1-\sigma)\frac{\partial^{2}u}{\partial\vec{v}^{2}}+\sigma\Delta
u=0\qquad \qquad &\mathrm{on} ~
\partial \Omega,\\
\tau\frac{\partial
u}{\partial\vec{v}}-(1-\sigma)\mathrm{div}_{\partial\Omega}\left({\mathrm{Proj}}_{\partial\Omega}\left[(D^{2}u)\vec{v}\right]\right)-\frac{\partial\Delta
u}{\partial\vec{v}}=0 \qquad \qquad &\mathrm{on} ~
\partial \Omega,
\end{array}
\right.
\end{eqnarray}
where $\rho\in\mathcal{R}^{N}:=\left\{\rho\in
L^{\infty}(\Omega)|\mathrm{essinf}_{x\in\Omega}\rho(x)>0\right\}$
 is a positive weight. This problem arises
in the study of free vibrating plate under tension and with nonzero
Poisson's ratio. One can see Section \ref{intro} for a brief
introduction of some interesting conclusions to this eigenvalue
problem.

Define
$\Omega_{\epsilon}:=\{x\in\Omega:\mathrm{dist}(x,\partial\Omega)>\epsilon\}$,
with $\mathrm{dist}(\cdot,\cdot)$ the Euclidean distance between two
geometric objects. As shown in \cite[Subsection 3.2]{bp}, one can
fix a positive number $M>0$ and choose the family of densities
$\rho_{\epsilon}$ defined by
\begin{eqnarray} \label{7}
\rho_{\epsilon}(x)=
 \left\{
\begin{array}{ll}
\epsilon, \qquad  &\mathrm{if}~ x\in  \Omega\\
\frac{M-\epsilon|\Omega_{\epsilon}|}{|\Omega\setminus\overline{\Omega}_{\epsilon}|},
\qquad  &\mathrm{if}~ x\in
\Omega\setminus\overline{\Omega}_{\epsilon}
\end{array}
\right.
\end{eqnarray}
for $\epsilon\in(0,\epsilon_{0})$ with $\epsilon_{0}>0$ sufficiently
small. Furthermore, assume that:

\begin{itemize}

\item $\Omega$ is of class $C^2$, $\epsilon_0$ can be chosen in such a way that the
map $x\mapsto x-\epsilon\vec{v}$ is a diffeomorphism between
$\partial\Omega$ and $\partial\Omega_{\epsilon}$ for all
$\epsilon\in(0,\epsilon_0)$.

\end{itemize}
By (\ref{7}), it is not hard to check that
$\int_{\Omega}\rho_{\epsilon}dx=M$ for all
$\epsilon\in(0,\epsilon_0)$. The quantity $M$ is called \emph{the
total mass of the body} (see \cite[Subsection 3.2]{bp}).

As pointed out in Section \ref{intro}, it is easy to know that if
$\tau>0$, $\sigma\in(-1/(n-1),1)$, the eigenvalue problem
(\ref{2-1-1}) only has the discrete spectrum, its nonnegative
eigenvalues (of finite multiplicity) can be listed non-decreasingly
to infinity, and moreover, all the eigenfunctions form a Hilbert
basis of $L^{2}(\Omega)$. Now, we consider the weak formulation of
problem (\ref{2-1-1}) with density $\rho_{\epsilon}$ as follows
\begin{eqnarray} \label{11}
\int_{\Omega}\left[(1-\sigma)D^{2}u:D^{2}\varphi+\sigma\Delta
u\cdot\Delta\varphi+\tau\nabla u\cdot\nabla\varphi\right]
dx=\lambda\int_{\Omega}\rho_{\epsilon} u\varphi dx,
\end{eqnarray}
for any $\varphi\in H^{2}(\Omega)$ with the unknowns $u\in
H^{2}(\Omega)$, $\lambda\in \mathbb{R}$.
Define
 \begin{eqnarray*}
\mathcal{J}^{N}_{\rho_{\epsilon}}[u][\varphi]:=\int_{\Omega}\rho_{\epsilon}u\varphi
dx,\forall u\in L^{2}(\Omega), \qquad \varphi\in H^{2}(\Omega),
 \end{eqnarray*}
 which is a
continuous embedding from $L^{2}(\Omega)$ into $H^{2}(\Omega)^{'}$.
Set
 \begin{eqnarray*}
  H^{2,N}_{\rho_{\epsilon}}(\Omega):=\left\{u\in
H^{2}(\Omega)\Big{|}\int_{\Omega}u\rho_{\varepsilon}dx=0\right\}.
 \end{eqnarray*}
The space $H^{2,N}_{\rho_{\epsilon}}(\Omega)$ can be endowed with
the form (\ref{5}) and then this form defines on
$H^{2,N}_{\rho_{\epsilon}}(\Omega)$ a scalar product, whose induced
norm is equivalent to the standard one. Define a map
$\pi^{N}_{\rho_{\epsilon}}:H^{2}(\Omega)\mapsto
H^{2,N}_{\rho_{\epsilon}}(\Omega)$ given by
 \begin{eqnarray*}
\pi^{N}_{\rho_{\epsilon}}[u]:=u-\frac{\int_{\Omega}u\rho_{\epsilon}dx}{\int_{\Omega}\rho_{\epsilon}dx}
\end{eqnarray*}
for all $u\in H^{2}(\Omega)$. Then we can define the map
$\pi^{\sharp,N}_{\rho_{\epsilon}}:H^{2}/\mathbb{R}\mapsto
H^{2,N}_{\rho_{\epsilon}}(\Omega)$ given by the equality
$\pi^{N}_{\rho_{\epsilon}}=\pi^{\sharp,N}_{\rho_{\epsilon}}\circ p$,
which is a homeomorphism. Similar to (\ref{6}), we can define a map
$\mathcal{P}^{N}_{\rho_{\epsilon}}:H^{2,N}_{\rho_{\epsilon}}(\Omega)\mapsto
F(\Omega)$ as follows
 \begin{eqnarray*}
\mathcal{P}^{N}_{\rho_{\epsilon}}[u][v]:=\int_{\Omega}\left[(1-\sigma)D^{2}u:D^{2}v+\sigma\Delta
u\cdot\Delta v+\tau\nabla u\cdot\nabla v\right]dx,
 \end{eqnarray*}
with $u\in H^{2,N}_{\rho_{\epsilon}}(\Omega)$, $v\in H^{2}(\Omega)$,
which is a linear homeomorphism of
$H^{2,N}_{\rho_{\epsilon}}(\Omega)$ onto $F(\Omega)$. Finally, we
can define an operator
$T^{N}_{\rho_{\epsilon}}:H^{2}(\Omega)/\mathbb{R}\mapsto
H^{2}(\Omega)/\mathbb{R}$ as follows
\begin{eqnarray} \label{4-5}
T^{N}_{\rho_{\epsilon}}:=(\pi^{\sharp,N}_{\rho_{\epsilon}})^{-1}\circ(\mathcal{P}^{N}_{\rho_{\epsilon}})^{-1}\circ
\mathcal{J}^{N}_{\rho_{\epsilon}}\circ
i\circ\pi^{\sharp,N}_{\rho_{\epsilon}}.
\end{eqnarray}
For the operator $T^{N}_{\rho_{\epsilon}}$, it is easy to find the
following fact:

\begin{itemize}
\item The pair $(\lambda,u)$ of set $(\mathbb{R}\setminus\{0\})\times
(H^{2,S}_{\rho}(\Omega)\setminus\{0\})$ satisfies
 (\ref{11}) if and only if $\lambda\neq0$ and the pair $(\lambda^{-1},p[u])$ of set $\mathbb{R}\times((H^{2}(\Omega)/\mathbb{R})\setminus\{0\}) $
 satisfies the equation
 $\lambda^{-1}p[u]=T^{N}_{\rho_{\epsilon}}p[u]$.
\end{itemize}
Similar to Theorem \ref{maintheorem-1}, we have:

\begin{theorem}
Let $\Omega$ be a bounded domain in $\mathbb{R}^n$ of class $C^1$
and $\epsilon\in(0,\epsilon_{0})$, and assume that
$\sigma\in(-1/(n-1),1)$, $\tau>0$. The operator
$T^{N}_{\rho_{\epsilon}}$ defined by (\ref{4-5}) is a non-negative
compact self-adjoint operator in $H^{2}(\Omega)/\mathbb{R}$, where
the eigenvalues of $T^{N}_{\rho_{\epsilon}}$ are the reciprocals of
the positive eigenvalues $\lambda_{j}(\rho_{\epsilon})$ of the
problem (\ref{2-1-1}) for all $j\in\mathbb{N}$. In particular, the
set of eigenvalues of problem (\ref{2-1-1}) consists the image of a
sequence contained in $[0,\infty)$ and increasing to $+\infty$.
Besides, the multiplicity of each eigenvalue is finite.
\end{theorem}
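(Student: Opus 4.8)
The plan is to follow, \emph{mutatis mutandis}, the argument already carried out for Theorem~\ref{maintheorem-1}, replacing each Steklov object by its Neumann counterpart: the boundary embedding $\mathcal{J}^{S}_{\rho}$ by the volume embedding $\mathcal{J}^{N}_{\rho_{\epsilon}}$, the trace operator $\mathrm{Tr}$ by the inclusion $i\colon H^{2}(\Omega)\hookrightarrow L^{2}(\Omega)$, and $\pi^{\sharp,S}_{\rho},\mathcal{P}^{S}_{\rho}$ by $\pi^{\sharp,N}_{\rho_{\epsilon}},\mathcal{P}^{N}_{\rho_{\epsilon}}$. Three properties must be established for $T^{N}_{\rho_{\epsilon}}$: self-adjointness, nonnegativity, and compactness. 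The first two are formal and transfer essentially verbatim; the genuinely new ingredient is the source of compactness, which here is the Rellich--Kondrachov embedding in place of the compactness of the trace.

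For self-adjointness, I would first record, exactly as in the proof of Theorem~\ref{maintheorem-1}, that $\pi^{\sharp,N}_{\rho_{\epsilon}}$ is a self-adjoint projection for the inner product induced by (\ref{5}) on $H^{2}(\Omega)/\mathbb{R}$: the orthogonal decomposition
\[
u=\pi^{\sharp,N}_{\rho_{\epsilon}}[u]+(\pi^{\sharp,N}_{\rho_{\epsilon}}[u])^{\perp}
\]
yields $\langle\pi^{\sharp,N}_{\rho_{\epsilon}}[u],v\rangle=\langle u,\pi^{\sharp,N}_{\rho_{\epsilon}}[v]\rangle$. Unwinding the definition (\ref{4-5}) and using the defining property of the homeomorphism $\mathcal{P}^{N}_{\rho_{\epsilon}}$, I would then compute
\[
\langle T^{N}_{\rho_{\epsilon}}u,v\rangle_{H^{2}(\Omega)/\mathbb{R}}=\int_{\Omega}\rho_{\epsilon}\,\pi^{\sharp,N}_{\rho_{\epsilon}}u\cdot\pi^{\sharp,N}_{\rho_{\epsilon}}v\,dx,
\]
whose right-hand side is symmetric in $u$ and $v$; hence $\langle T^{N}_{\rho_{\epsilon}}u,v\rangle=\langle u,T^{N}_{\rho_{\epsilon}}v\rangle$, giving self-adjointness. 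Taking $v=u$ above and using $\rho_{\epsilon}>0$ shows $\langle T^{N}_{\rho_{\epsilon}}u,u\rangle\geq0$, so $T^{N}_{\rho_{\epsilon}}$ is nonnegative; equivalently, the pointwise bound $(\Delta u)^{2}\leq n|D^{2}u|^{2}$ combined with $\sigma\in(-1/(n-1),1)$ and $\tau>0$ makes $\mathcal{P}^{N}_{\rho_{\epsilon}}$ positive, which gives the same conclusion.

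For compactness, the only substantive change from Theorem~\ref{maintheorem-1} is that $T^{N}_{\rho_{\epsilon}}$ now factors through the inclusion $i\colon H^{2}(\Omega)\hookrightarrow L^{2}(\Omega)$, which is compact by the Rellich--Kondrachov theorem since $\Omega$ is a bounded domain of class $C^{1}$; all remaining factors in (\ref{4-5}) are bounded, so the composition is compact. Being nonnegative, compact and self-adjoint, $T^{N}_{\rho_{\epsilon}}$ admits, by the spectral theorem, a nonincreasing sequence of positive eigenvalues of finite multiplicity accumulating only at $0$. By the equivalence recorded just before the theorem (between nontrivial solutions of (\ref{11}) and eigenpairs of $T^{N}_{\rho_{\epsilon}}$), the reciprocals of these eigenvalues are precisely the positive eigenvalues $\lambda_{j}(\rho_{\epsilon})$ of (\ref{2-1-1}), which therefore form a sequence in $[0,\infty)$ increasing to $+\infty$, each of finite multiplicity. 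I expect the only point needing care is this compactness step: one must confirm that $C^{1}$ regularity of $\partial\Omega$ suffices for the compact embedding $H^{2}(\Omega)\hookrightarrow L^{2}(\Omega)$ (it does, Lipschitz regularity already being enough) and that the coercivity of (\ref{5}) on $H^{2,N}_{\rho_{\epsilon}}(\Omega)$, guaranteed by $\sigma\in(-1/(n-1),1)$ and $\tau>0$ together with the Poincar\'{e}--Wirtinger inequality, indeed renders $\mathcal{P}^{N}_{\rho_{\epsilon}}$ a homeomorphism as asserted above.
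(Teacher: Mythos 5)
Your proposal is correct and is exactly the argument the paper intends: the paper states this theorem with only the remark ``Similar to Theorem~\ref{maintheorem-1}'', and your adaptation --- repeating the orthogonal-decomposition computation for self-adjointness, the bound $(\Delta u)^{2}\leq n|D^{2}u|^{2}$ with $\sigma\in(-1/(n-1),1)$, $\tau>0$ for nonnegativity, and replacing compactness of the trace by the compact embedding $H^{2}(\Omega)\hookrightarrow L^{2}(\Omega)$ --- is precisely that intended adaptation. Your closing caveats (sufficiency of $C^{1}$ regularity for Rellich--Kondrachov, and coercivity of (\ref{5}) making $\mathcal{P}^{N}_{\rho_{\epsilon}}$ a homeomorphism) are the right points to check and both hold as you say.
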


We have the following spectral conclusion:

\begin{theorem}
The first eigenvalue $\lambda_{1}$ of problem (\ref{2-1-1}) is equal
to zero whose eigenfunctions are the constant functions. Moreover,
$\lambda_{2}(\rho_{\epsilon})>0$.
\end{theorem}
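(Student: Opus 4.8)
The plan is to mirror, for the interior weight $\rho_{\epsilon}$ and the weak formulation (\ref{11}), the short argument already carried out after Theorem~\ref{maintheorem-1} for the Steklov weight. The proof splits naturally into the two assertions: that $\lambda=0$ is an eigenvalue whose eigenspace is exactly the constants (so $\lambda_{1}=0$ with multiplicity one), and that the next eigenvalue $\lambda_{2}(\rho_{\epsilon})$ is strictly positive.

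First I would verify that every nonzero constant is an eigenfunction for $\lambda=0$. If $u\equiv c\neq0$, then $D^{2}u=0$, $\Delta u=0$ and $\nabla u=0$, so the left-hand side of (\ref{11}) vanishes for every $\varphi\in H^{2}(\Omega)$; since $\rho_{\epsilon}>0$ makes $\int_{\Omega}\rho_{\epsilon}u\varphi\,dx$ not identically zero (take $\varphi=u$), the weak identity forces $\lambda=0$. Conversely, if $(0,u)$ solves (\ref{11}), choosing $\varphi=u$ gives
\[
\int_{\Omega}\Big[(1-\sigma)|D^{2}u|^{2}+\sigma|\Delta u|^{2}+\tau|\nabla u|^{2}\Big]\,dx=0.
\]
Here the decisive step is the sign of the integrand. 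Using the sharp bound $(\Delta u)^{2}\leq n|D^{2}u|^{2}$ recorded above together with $\sigma\in(-1/(n-1),1)$, one gets in the delicate case $\sigma<0$
\[
(1-\sigma)|D^{2}u|^{2}+\sigma|\Delta u|^{2}\geq\big(1+(n-1)\sigma\big)|D^{2}u|^{2}\geq0,
\]
the coefficient being positive precisely because $\sigma>-1/(n-1)$; for $\sigma\geq0$ the same combination is trivially nonnegative. Since $\tau>0$, the whole integrand is nonnegative, so its vanishing integral forces each term to vanish a.e.; in particular $\tau|\nabla u|^{2}\equiv0$, hence $\nabla u\equiv0$, and as $\Omega$ is connected $u$ must be constant. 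Thus the eigenspace of $\lambda=0$ is exactly the one-dimensional space of constants, giving $\lambda_{1}=0$ with multiplicity one.

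For $\lambda_{2}(\rho_{\epsilon})>0$ I would use the variational characterization furnished by the preceding theorem (the analogue, for the interior weight $\rho_{\epsilon}$, of the Rayleigh-quotient formula stated in Subsection~\ref{subsec2-1}): $\lambda_{2}(\rho_{\epsilon})$ is the minimum of the quotient over nonzero $u\in H^{2}(\Omega)$ subject to $\int_{\Omega}\rho_{\epsilon}u\,dx=0$. Any admissible $u$ cannot be a nonzero constant, since $\int_{\Omega}\rho_{\epsilon}\,c\,dx=cM\neq0$ for $c\neq0$; therefore $\nabla u$ does not vanish identically, and by the nonnegativity just established the numerator is strictly positive while the denominator is positive. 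Hence $\lambda_{2}(\rho_{\epsilon})>0$. Equivalently, one may argue purely spectrally: the preceding theorem guarantees nonnegative eigenvalues with $0$ of multiplicity one, so the following eigenvalue is strictly positive.

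I do not anticipate a genuine obstacle: the only point requiring care is the sign analysis of $(1-\sigma)|D^{2}u|^{2}+\sigma|\Delta u|^{2}$, which is exactly where the hypothesis $\sigma\in(-1/(n-1),1)$ enters and which simultaneously underlies the nonnegativity of the form, the simplicity of the zero eigenvalue, and the strict positivity of $\lambda_{2}(\rho_{\epsilon})$.
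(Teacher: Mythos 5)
Your proposal is correct and takes essentially the same approach as the paper: the paper states this theorem without a separate proof precisely because it follows by repeating, for the interior weight $\rho_{\epsilon}$ and the weak form (\ref{11}), the argument given after Theorem \ref{maintheorem-1} for the Steklov weight, which is exactly what you reproduce (constants give $\lambda=0$; vanishing of the nonnegative quadratic form, via $(\Delta u)^{2}\leq n|D^{2}u|^{2}$ and $\sigma\in(-1/(n-1),1)$, forces $\nabla u\equiv0$, hence simplicity of the zero eigenvalue). Your concluding spectral remark, invoking the compactness/discreteness from the preceding theorem, correctly supplies the attainment needed so that strict positivity of the Rayleigh quotient on the admissible class indeed yields $\lambda_{2}(\rho_{\epsilon})>0$.
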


Now, we would like to show the asymptotic property between the
problem (\ref{weq}) and the problem (\ref{2-1-1}) if $\Omega$ is of
class $C^2$. However, in order to get that, we need to make some
preparations. In fact, if $\Omega$ is of class $C^2$, then the
Tubular Neighborhood Theorem can be used to perform computations on
the strip $\Omega\setminus\overline{\Omega}_{\epsilon}$. Moreover,
following a standard argument similar to the one shown in \cite{lp},
we can get the following conclusion:

\begin{lemma}  \label{lemma-add}
Let $\Omega$ be a bounded domain in $\mathbb{R}^{n}$ of class
$C^{2}$. Let $\rho_{\epsilon}\in\mathcal{R}^{N}$ defined by
(\ref{7}). Then we have the followings:

\begin{itemize}

\item For all $\varphi\in H^{2}(\Omega)/\mathbb{R}$, $\pi^{\sharp,N}_{\rho_{\epsilon}}[\varphi]
\rightarrow\pi^{\sharp,S}_{1}[\varphi]$ in $L^{2}(\Omega)$ (also in
$H^{2}(\Omega)$) as $\epsilon\rightarrow 0$;

\item If $u_{\epsilon}\rightharpoonup u$ in
$H^{2}(\Omega)/\mathbb{R}$, then (possibly passing to a subsequence)
$\pi^{\sharp,N}_{\rho_{\epsilon}}[u_{\epsilon}]\rightarrow\pi^{\sharp,S}_{1}[u]$
in $L^{2}(\Omega)$ as $\epsilon\rightarrow 0$;

\item Assume that $u_{\epsilon}, u,w_{\epsilon}, w\in
H^{2}(\Omega)$ are functions such that $u_{\epsilon}\rightarrow
u,w_{\epsilon}\rightarrow w$ in $L^{2}(\Omega)$,
$\mathrm{Tr}[u_{\epsilon}]\rightarrow\mathrm{Tr}[u],
\mathrm{Tr}[w_{\epsilon}]\rightarrow\mathrm{Tr}[w]$ in
$L^{2}(\partial\Omega)$ as $\epsilon\rightarrow 0$. Moreover, assume
that there exists a constant $C>0$ such that $\|\nabla
u_{\epsilon}\|_{L^{2}(\Omega)}\leq C$, $\|\nabla
w_{\epsilon}\|_{L^{2}(\Omega)}\leq C$ for all $\epsilon\in
(0,\epsilon_{0})$. Then
 \begin{eqnarray*}
\int_{\Omega}\rho_{\epsilon}(u_{\epsilon}-u)w_{\epsilon}dx\rightarrow0
 \end{eqnarray*}
 and
  \begin{eqnarray*}
\int_{\Omega}\rho_{\epsilon}(w_{\epsilon}-w)udx\rightarrow 0
 \end{eqnarray*}
  as
$\epsilon\rightarrow0$.
\end{itemize}
\end{lemma}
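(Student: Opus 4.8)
The plan is to base all three items on a single device: the tubular-neighborhood parametrization of the shrinking boundary layer. Since $\Omega$ is of class $C^{2}$, the map $(y,t)\mapsto y-t\vec{v}(y)$, $y\in\partial\Omega$, $t\in(0,\epsilon)$, is for $\epsilon<\epsilon_{0}$ a diffeomorphism onto $\Omega\setminus\overline{\Omega}_{\epsilon}$, with $dx=J(y,t)\,dS(y)\,dt$ and $J(y,t)=1+O(t)$ uniformly on $\partial\Omega$ (the error being controlled by the principal curvatures, which are bounded by $C^{2}$ regularity). First I would record two consequences. (i) Since $|\Omega\setminus\overline{\Omega}_{\epsilon}|=\epsilon|\partial\Omega|(1+O(\epsilon))$ and $\epsilon|\Omega_{\epsilon}|\to0$, on the layer the density behaves like $\rho_{\epsilon}=\frac{M-\epsilon|\Omega_{\epsilon}|}{|\Omega\setminus\overline{\Omega}_{\epsilon}|}=\frac{M}{\epsilon|\partial\Omega|}(1+o(1))$. (ii) A trace-type strip inequality: writing $f(y-t\vec{v})=\mathrm{Tr}[f](y)-\int_{0}^{t}\nabla f(y-s\vec{v})\cdot\vec{v}\,ds$ for $f\in H^{1}(\Omega)$, squaring and integrating over the layer yields
\[
\int_{\Omega\setminus\overline{\Omega}_{\epsilon}}|f|^{2}\,dx\le C\epsilon\|\mathrm{Tr}\,f\|_{L^{2}(\partial\Omega)}^{2}+C\epsilon^{2}\|\nabla f\|_{L^{2}(\Omega)}^{2}.
\]
The interplay between the $1/\epsilon$ blow-up in (i) and the gain of powers of $\epsilon$ in (ii) is what makes the concentrated density harmless, and keeping the constants in (ii) uniform is the only genuine difficulty.

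For the first item I would fix the representative $w:=\pi^{\sharp,S}_{1}[\varphi]$, which satisfies $\int_{\partial\Omega}w\,dS=0$. Then $\pi^{\sharp,N}_{\rho_{\epsilon}}[\varphi]=w-\frac{1}{M}\int_{\Omega}w\rho_{\epsilon}\,dx$ differs from $\pi^{\sharp,S}_{1}[\varphi]=w$ only by the constant $c_{\epsilon}=-\frac{1}{M}\int_{\Omega}w\rho_{\epsilon}\,dx$, so it suffices to show $c_{\epsilon}\to0$; the convergence then holds simultaneously in $L^{2}(\Omega)$ and in $H^{2}(\Omega)$, since a constant and all its derivatives have norm controlled by $|c_{\epsilon}|$. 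Splitting $\rho_{\epsilon}$, the interior contribution $\frac{\epsilon}{M}\int_{\Omega_{\epsilon}}w\,dx$ vanishes because of the factor $\epsilon$, while on the layer I would insert (i) and pass to the limit in the $(y,t)$ coordinates using $w(y-t\vec{v})\to\mathrm{Tr}[w](y)$ and $J\to1$, obtaining $\frac{1}{M}\int_{\Omega}w\rho_{\epsilon}\,dx\to\frac{1}{|\partial\Omega|}\int_{\partial\Omega}w\,dS=0$.

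For the second item, weak convergence in $H^{2}(\Omega)/\mathbb{R}$ makes $\{u_{\epsilon}\}$ bounded; since $\pi^{\sharp,S}_{1}$ is a homeomorphism onto $H^{2,S}_{1}(\Omega)$, the representatives $v_{\epsilon}:=\pi^{\sharp,S}_{1}[u_{\epsilon}]$ are bounded in $H^{2}(\Omega)$ and converge weakly to $v:=\pi^{\sharp,S}_{1}[u]$. By the compactness of $H^{2}\hookrightarrow L^{2}(\Omega)$ and of the trace $H^{2}\to L^{2}(\partial\Omega)$, I extract a subsequence with $v_{\epsilon}\to v$ in $L^{2}(\Omega)$ and $\mathrm{Tr}[v_{\epsilon}]\to\mathrm{Tr}[v]$ in $L^{2}(\partial\Omega)$. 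Writing $\pi^{\sharp,N}_{\rho_{\epsilon}}[u_{\epsilon}]=v_{\epsilon}-\frac{1}{M}\int_{\Omega}v_{\epsilon}\rho_{\epsilon}\,dx$, the claim reduces to $v_{\epsilon}-v\to0$ (Rellich) together with the vanishing of the constant $\frac{1}{M}\int_{\Omega}v_{\epsilon}\rho_{\epsilon}\,dx$. For the latter I would argue as in item one, the crucial point being that each $v_{\epsilon}$ has zero boundary average, so the limiting boundary term is $0$; the error in replacing $v_{\epsilon}(y-t\vec{v})$ by its trace is $o(\epsilon)$ uniformly thanks to the strip inequality and the uniform bound $\|\nabla v_{\epsilon}\|_{L^{2}}\le C$, and it disappears after division by the $O(1/\epsilon)$ density.

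The third item is the workhorse and I would prove it directly. Splitting $\rho_{\epsilon}$, the interior piece $\epsilon\int_{\Omega_{\epsilon}}(u_{\epsilon}-u)w_{\epsilon}\,dx$ is $O(\epsilon)$ since $u_{\epsilon}-u\to0$ and $w_{\epsilon}$ is bounded in $L^{2}$. On the layer the density is $\sim M/(\epsilon|\partial\Omega|)$, so I must show $\int_{\Omega\setminus\overline{\Omega}_{\epsilon}}(u_{\epsilon}-u)w_{\epsilon}\,dx=o(\epsilon)$. Setting $g_{\epsilon}:=u_{\epsilon}-u$, so that $a_{\epsilon}:=\|\mathrm{Tr}\,g_{\epsilon}\|_{L^{2}(\partial\Omega)}\to0$ and $\|\nabla g_{\epsilon}\|\le 2C$, Cauchy--Schwarz together with the strip inequality applied to $g_{\epsilon}$ and to $w_{\epsilon}$ gives
\[
\left|\int_{\Omega\setminus\overline{\Omega}_{\epsilon}}(u_{\epsilon}-u)w_{\epsilon}\,dx\right|\le\big(C\epsilon a_{\epsilon}^{2}+C\epsilon^{2}\big)^{1/2}\big(C\epsilon\big)^{1/2}=O(\epsilon a_{\epsilon}+\epsilon^{3/2}),
\]
which, multiplied by the $O(1/\epsilon)$ density, tends to $0$. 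The second integral $\int_{\Omega}\rho_{\epsilon}(w_{\epsilon}-w)u\,dx$ is handled identically with the roles interchanged, now using that $u\in H^{2}(\Omega)$ is fixed (so $\|\mathrm{Tr}\,u\|$ and $\|\nabla u\|$ are constants) and $w_{\epsilon}-w\to0$ in $L^{2}$ with $\mathrm{Tr}[w_{\epsilon}-w]\to0$. I expect the main obstacle throughout to be establishing the strip inequality with constants independent of $\epsilon$ and of the functions, together with the uniform Jacobian expansion $J=1+O(t)$; this is exactly where the $C^{2}$ hypothesis and the uniform gradient bounds are indispensable, and once it is in place all three items follow by routine combination with Rellich compactness.
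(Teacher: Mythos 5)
Your proposal is correct and is essentially the approach the paper intends: the paper in fact gives no proof of this lemma, deferring to ``a standard argument similar to the one shown in \cite{lp}'' based on the Tubular Neighborhood Theorem, and your uniform strip estimate $\int_{\Omega\setminus\overline{\Omega}_{\epsilon}}|f|^{2}\,dx\le C\epsilon\|\mathrm{Tr}f\|_{L^{2}(\partial\Omega)}^{2}+C\epsilon^{2}\|\nabla f\|_{L^{2}(\Omega)}^{2}$, combined with the Jacobian expansion $J=1+O(t)$ and the density asymptotics $\rho_{\epsilon}\sim M/(\epsilon|\partial\Omega|)$ on the strip, is precisely that argument carried out in detail, with the key cancellations (zero boundary mean in items one and two, vanishing trace norm $a_{\epsilon}\to 0$ in item three) correctly identified. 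One small remark: you implicitly read the definition (\ref{7}) as $\rho_{\epsilon}=\epsilon$ on $\Omega_{\epsilon}$ rather than on $\Omega$, which is the intended reading (a typo in the paper), since otherwise $\int_{\Omega}\rho_{\epsilon}\,dx=M$ fails.
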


By applying Lemma \ref{lemma-add}, we can obtain:
\begin{theorem}
Let $\Omega$ be a bounded domain in $\mathbb{R}^{n}$ of class
$C^{2}$. Let the operators $T^{S}_{\frac{M}{|\partial\Omega|}}$ and
$T^{N}_{\rho_{\epsilon}}$ from $H^{2}(\Omega)/\mathbb{R}$ to itself
be defined as in (\ref{8}) and (\ref{4-5}), respectively. Then the
sequence
$\{T^{N}_{\rho_{\epsilon}}\}_{\epsilon\in(0,\varepsilon_{0})}$
converges in norm to $T^{S}_{\frac{M}{|\partial\Omega|}}$ as
$\epsilon\rightarrow0$.
\end{theorem}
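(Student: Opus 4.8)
The plan is to reduce the claimed operator-norm convergence to a compact-convergence statement and then pass to the limit in the weak formulations, using Lemma \ref{lemma-add} to control the mass terms that concentrate on $\partial\Omega$. First I would record the standard criterion for norm convergence of compact self-adjoint operators: since $T^{S}_{M/|\partial\Omega|}$ is compact by Theorem \ref{maintheorem-1}, it suffices to show that whenever $u_{\epsilon}\rightharpoonup u$ weakly in $H^{2}(\Omega)/\mathbb{R}$ one has $T^{N}_{\rho_{\epsilon}}u_{\epsilon}\to T^{S}_{M/|\partial\Omega|}u$ strongly. Indeed, if norm convergence failed, one could extract unit vectors $u_{\epsilon}$ with $\|(T^{N}_{\rho_{\epsilon}}-T^{S}_{M/|\partial\Omega|})u_{\epsilon}\|\geq\delta>0$; passing to a weakly convergent subsequence $u_{\epsilon}\rightharpoonup u$, compactness of $T^{S}_{M/|\partial\Omega|}$ gives $T^{S}_{M/|\partial\Omega|}u_{\epsilon}\to T^{S}_{M/|\partial\Omega|}u$, while the hypothesis gives $T^{N}_{\rho_{\epsilon}}u_{\epsilon}\to T^{S}_{M/|\partial\Omega|}u$, forcing the difference to zero, a contradiction.

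Next I would unfold the definitions. Writing $w_{\epsilon}:=T^{N}_{\rho_{\epsilon}}u_{\epsilon}$ and $\psi_{\epsilon}:=\pi^{\sharp,N}_{\rho_{\epsilon}}u_{\epsilon}$, the definition (\ref{4-5}) together with the forms $\mathcal{P}^{N}_{\rho_{\epsilon}}$ and $\mathcal{J}^{N}_{\rho_{\epsilon}}$ shows that $\pi^{\sharp,N}_{\rho_{\epsilon}}w_{\epsilon}$ is characterized by
\[
\langle \pi^{\sharp,N}_{\rho_{\epsilon}}w_{\epsilon},\phi\rangle=\int_{\Omega}\rho_{\epsilon}\,\psi_{\epsilon}\,\phi\,dx\qquad\forall\,\phi\in H^{2}(\Omega),
\]
where $\langle\cdot,\cdot\rangle$ denotes the form (\ref{5}); likewise, by (\ref{8}), the element $\pi^{\sharp,S}_{1}(T^{S}_{M/|\partial\Omega|}u)$ is characterized by
\[
\langle \pi^{\sharp,S}_{1}(T^{S}_{M/|\partial\Omega|}u),\phi\rangle=\frac{M}{|\partial\Omega|}\int_{\partial\Omega}(\pi^{\sharp,S}_{1}u)\,\phi\,dS\qquad\forall\,\phi\in H^{2}(\Omega).
\]
Testing the first identity with $\phi=\pi^{\sharp,N}_{\rho_{\epsilon}}w_{\epsilon}$ and applying the Cauchy--Schwarz inequality together with the mass-concentration estimate underlying Lemma \ref{lemma-add} yields a uniform bound $\|\pi^{\sharp,N}_{\rho_{\epsilon}}w_{\epsilon}\|\leq C$, hence $w_{\epsilon}$ is bounded in $H^{2}(\Omega)/\mathbb{R}$ and, up to a subsequence, $w_{\epsilon}\rightharpoonup\bar w$.

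Then I would identify the limit. The first two bullets of Lemma \ref{lemma-add} give $\psi_{\epsilon}=\pi^{\sharp,N}_{\rho_{\epsilon}}u_{\epsilon}\to\pi^{\sharp,S}_{1}u$ and $\pi^{\sharp,N}_{\rho_{\epsilon}}w_{\epsilon}\rightharpoonup\pi^{\sharp,S}_{1}\bar w$, while the third bullet permits the passage $\int_{\Omega}\rho_{\epsilon}\psi_{\epsilon}\phi\,dx\to \frac{M}{|\partial\Omega|}\int_{\partial\Omega}(\pi^{\sharp,S}_{1}u)\phi\,dS$. Passing to the limit in the variational identity for $w_{\epsilon}$ reproduces exactly the characterization of $T^{S}_{M/|\partial\Omega|}u$, so $\bar w=T^{S}_{M/|\partial\Omega|}u$; since the limit is independent of the subsequence, the whole family converges weakly. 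To upgrade to strong convergence I would test once more with $\phi=\pi^{\sharp,N}_{\rho_{\epsilon}}w_{\epsilon}$ and pass to the limit, again via Lemma \ref{lemma-add}, to obtain $\|\pi^{\sharp,N}_{\rho_{\epsilon}}w_{\epsilon}\|^{2}\to\|\pi^{\sharp,S}_{1}T^{S}_{M/|\partial\Omega|}u\|^{2}$; convergence of norms together with weak convergence in the Hilbert space $H^{2}(\Omega)/\mathbb{R}$ forces strong convergence of $\pi^{\sharp,N}_{\rho_{\epsilon}}w_{\epsilon}$, and hence of $w_{\epsilon}$, as required.

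The step I expect to be the main obstacle is the limit passage in the boundary-mass integrals $\int_{\Omega}\rho_{\epsilon}\psi_{\epsilon}\phi\,dx$: the integrand couples two $\epsilon$-dependent objects, namely the concentrating weight $\rho_{\epsilon}$ of (\ref{7}) and the moving projection $\psi_{\epsilon}$, so the convergence cannot be read off termwise and relies essentially on the third bullet of Lemma \ref{lemma-add}, whose hypotheses (uniform $H^{1}$ bounds and trace convergence of the sequences involved) must be verified carefully. Since both the a priori bound on $w_{\epsilon}$ and the norm-convergence that powers the strong-convergence upgrade hinge on this same estimate, it constitutes the technical heart of the argument.
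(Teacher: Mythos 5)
Your proposal is correct, and its technical core --- passing to the limit in the variational identity $\langle\pi^{\sharp,N}_{\rho_{\epsilon}}w_{\epsilon},\phi\rangle=\int_{\Omega}\rho_{\epsilon}\psi_{\epsilon}\phi\,dx$ by means of Lemma \ref{lemma-add}, identifying the weak limit of $w_{\epsilon}=T^{N}_{\rho_{\epsilon}}u_{\epsilon}$ as $T^{S}_{M/|\partial\Omega|}u$, and then upgrading weak to strong convergence through convergence of the norms $\|w_{\epsilon}\|_{H^{2}(\Omega)/\mathbb{R}}$ --- is exactly the paper's argument. Where you genuinely differ is in the two bookkeeping steps surrounding that core. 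First, the reduction: the paper casts the result as \emph{compact convergence}, proving the two requirements (R1) (boundedness of $u_{\epsilon}$ yields relative compactness of $\{T^{N}_{\rho_{\epsilon}}u_{\epsilon}\}$) and (R2) (strong convergence $u_{\epsilon}\to u$ forces $T^{N}_{\rho_{\epsilon}}u_{\epsilon}\to T^{S}_{M/|\partial\Omega|}u$), and then uses self-adjointness to pass from compact convergence to the operator-norm statement (\ref{2-13}); your contradiction argument --- extract unit vectors realizing the norm gap, pass to a weak limit, and use only the compactness of the limit operator $T^{S}_{M/|\partial\Omega|}$ --- is more elementary and self-contained, since your ``weak-to-strong'' property implies both (R1) and (R2) and delivers norm convergence directly without citing the compact-convergence machinery. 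Second, the a priori bound on $w_{\epsilon}$: you obtain it by testing with $\phi=\pi^{\sharp,N}_{\rho_{\epsilon}}w_{\epsilon}$ together with a uniform trace-type estimate for the $\rho_{\epsilon}$-weighted integrals, whereas the paper deduces boundedness of $T^{N}_{\rho_{\epsilon}}u$ for each fixed $u$ (using the identity $(\pi^{\sharp,N}_{\rho_{\epsilon}})^{-1}\circ(\mathcal{P}^{N}_{\rho_{\epsilon}})^{-1}=(\pi^{\sharp,S}_{1})^{-1}\circ(\mathcal{P}^{S}_{1})^{-1}$ and Lemma \ref{lemma-add}) and then invokes the Banach--Steinhaus theorem. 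Both routes are valid; yours trades the soft functional-analytic citations for one hard estimate --- the uniform bound $\int_{\Omega}\rho_{\epsilon}|f||g|\,dx\leq C\|f\|_{H^{1}(\Omega)}\|g\|_{H^{1}(\Omega)}$ on the concentrating strip --- which you correctly single out as the step requiring careful verification, and which is indeed the place where the $C^{2}$ regularity of $\partial\Omega$ (via the tubular neighborhood) enters.
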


\begin{proof}
By (\ref{7-1}), one can easily get
$$\pi^{\sharp,S}_{c}=\pi^{\sharp,S}_{1}$$
 for all $c\in\mathbb{R}$ with $c\neq 0$. We need to show that the
family of compact operators
$\{T^{N}_{\rho_{\epsilon}}\}_{\epsilon\in(0,\epsilon_{0})}$
converges compactly to the compact operator
$T^{S}_{\frac{M}{|\partial\Omega|}}$, that is to say,
\begin{eqnarray} \label{2-13}
\lim\limits_{\epsilon\rightarrow
0}\left\|\left(T^{N}_{\rho_{\epsilon}}-T^{S}_{\frac{M}{|\partial\Omega|}}\right)^{2}
\right\|_{L(H^{2}(\Omega)/\mathbb{R},H^{2}(\Omega)/\mathbb{R})}=0.
\end{eqnarray}
The operators
$\{T^{N}_{\rho_{\epsilon}}\}_{\epsilon\in(0,\epsilon_{0})}$ and
$T^{S}_{\frac{M}{|\partial\Omega|}}$ are self-adjoint, so we only
need to prove that
$\{T^{N}_{\rho_{\epsilon}}\}_{\epsilon\in(0,\varepsilon_{0})}$
converges to $T^{S}_{\frac{M}{|\partial\Omega|}}$ in norm. By
(\ref{4-5}), we know that the operator $T^{N}_{\rho_{\epsilon}}$
compactly converges to $T^{S}_{\frac{M}{|\partial\Omega|}}$ if the
following requirements are satisfied:

\begin{itemize}

\item \underline{(R1)} If $\|u_{\epsilon}\|_{H^{2}(\Omega)/\mathbb{R}}\leq C$ for all $\epsilon\in(0,\epsilon_{0})$,
then the family
$\{T^{N}_{\rho_{\epsilon}}u_{\epsilon}\}_{\epsilon\in(0,\epsilon_{0})}$
is relatively compact in $H^{2}(\Omega)/\mathbb{R}$;

\item \underline{(R2)} if $u_{\epsilon}\rightarrow u$ in $H^{2}(\Omega)/\mathbb{R}$,
then $T^{N}_{\rho_{\epsilon}}u_{\epsilon}\rightarrow
T^{S}_{\frac{M}{|\partial\Omega|}}u$ in $H^{2}(\Omega)/\mathbb{R}$.

\end{itemize}
We show \underline{(R1)} first.  For a fixed $u\in
H^{2}(\Omega)/\mathbb{R}$, by Lemma \ref{lemma-add}, we have
\begin{equation*}
\begin{split}
\lim\limits_{\epsilon\rightarrow
0}\int_{\Omega}\rho_{\epsilon}\pi^{\sharp,N}_{\rho_{\epsilon}}[u]dx=&\lim\limits_{\epsilon\rightarrow
0}
\int_{\Omega}\rho_{\epsilon}(\pi^{\sharp,N}_{\rho_{\epsilon}}[u]-\pi^{\sharp,S}_{1}[u])dx\\
&\quad+\left(\lim\limits_{\epsilon\rightarrow 0}\int_{\Omega}\rho_{\epsilon}\pi^{\sharp,S}_{1}[u]dx-\frac{M}{|\partial\Omega|}\int_{\partial\Omega}\pi^{\sharp,S}_{1}[u]dS\right)\\
&\quad+\frac{M}{|\partial\Omega|}\int_{\partial\Omega}\pi^{\sharp,S}_{1}[u]dS\\
=&\frac{M}{|\partial\Omega|}\int_{\partial\Omega}\pi^{\sharp,S}_{1}[u]dS.
\end{split}
\end{equation*}
Moreover, the equality
$(\pi^{\sharp,N}_{\rho_{\epsilon}})^{-1}\circ(\mathcal{P}^{N}_{\rho_{\epsilon}})^{-1}=
(\pi^{\sharp,S}_{1})^{-1}\circ(\mathcal{P}^{S}_{1})^{-1}$ holds,
which implies that $T^{N}_{\rho_{\epsilon}}u$ is bounded for each
$u\in H^{2}(\Omega)/\mathbb{R}$. By Banach-Steinhaus Theorem, there
exists a non-negative constant $C'$ such that
$\|T^{N}_{\rho_{\epsilon}}u\|_{L(H^{2}(\Omega)/\mathbb{R},H^{2}(\Omega)/\mathbb{R})}\leq
C'$ for all $\epsilon\in(0,\epsilon_{0})$. Moreover, since
$\|u_{\epsilon}\|_{H^{2}(\Omega)/\mathbb{R}}\leq C$ for
$\epsilon\in(0,\epsilon_{0})$,
 possibly passing to a subsequence, we have $u_{\epsilon}\rightharpoonup u$ in $H^{2}(\Omega)/\mathbb{R}$ for some
$u\in H^{2}(\Omega)/\mathbb{R}$. Hence, possibly passing to a
subsequence, $T^{N}_{\rho_{\epsilon}}u_{\epsilon}\rightharpoonup w$
in $H^{2}(\Omega)/\mathbb{R}$ as $\epsilon\rightarrow 0$. We can
show $w=T^{S}_{\frac{M}{|\partial\Omega|}}u$. Set
$w_{\epsilon}:=T^{N}_{\rho_{\epsilon}}u_{\epsilon}$. By Lemma
\ref{lemma-add}, one has
\begin{equation*}
\begin{split}
\lim\limits_{\epsilon\rightarrow
0}&\int_{\Omega}\Big{[}(1-\sigma)D^{2}(\pi^{\sharp,N}_{\rho_{\epsilon}}
[w_{\epsilon}]):D^{2}(\pi^{\sharp,N}_{\rho_{\epsilon}}[\varphi])
+\sigma\Delta(\pi^{\sharp,N}_{\rho_{\epsilon}}[w_{\epsilon}])\cdot\Delta(\pi^{\sharp,N}_{\rho_{\epsilon}}[\varphi])\\
&+\tau\nabla(\pi^{\sharp,N}_{\rho_{\epsilon}}[w_{\epsilon}])\cdot\nabla(\pi^{\sharp,N}_{\rho_{\epsilon}}[\varphi])\Big{]}dx\\
=&\int_{\Omega}\Big{[}(1-\sigma)D^{2}(\pi^{\sharp,S}_{1}[w]):D^{2}(\pi^{\sharp,S}_{1}[\varphi])+
\sigma\Delta(\pi^{\sharp,S}_{1}[w])\cdot\Delta(\pi^{\sharp,S}_{1}[\varphi])\\
&+\tau\nabla(\pi^{\sharp,S}_{1}[w])\cdot\nabla(\pi^{\sharp,S}_{1}[\varphi])\Big{]}dx
\end{split}
\end{equation*}
for all $\varphi\in H^{2}(\Omega)/\mathbb{R}$. On the other hand,
from the equality $(\mathcal{P}^{N}_{\rho_{\epsilon}}\circ
\pi^{\sharp,N}_{\rho_{\epsilon}})w_{\epsilon}=(\mathcal{J}^{N}_{\rho_{\epsilon}}\circ
i\circ\pi^{\sharp,N}_{\rho_{\epsilon}})u_{\epsilon}$, it follows
that
\begin{equation*}
\begin{split}
\int_{\Omega}&\Big{[}(1-\sigma)D^{2}(\pi^{\sharp,N}_{\rho_{\epsilon}}
[w_{\epsilon}]):D^{2}(\pi^{\sharp,N}_{\rho_{\epsilon}}[\varphi])
+\sigma\Delta(\pi^{\sharp,N}_{\rho_{\epsilon}}[w_{\epsilon}])\cdot\Delta(\pi^{\sharp,N}_{\rho_{\epsilon}}[\varphi])\\
&+\tau\nabla(\pi^{\sharp,N}_{\rho_{\epsilon}}[w_{\epsilon}])\cdot\nabla(\pi^{\sharp,N}_{\rho_{\epsilon}}[\varphi])\Big{]}dx\\
=&\int_{\Omega}\rho_{\epsilon}\pi^{\sharp,N}_{\rho_{\epsilon}}[u_{\epsilon}]\pi^{\sharp,N}_{\rho_{\epsilon}}[\varphi]dx.
\end{split}
\end{equation*}
Then, by the third claim of Lemma \ref{lemma-add}, we have
\begin{equation*}
\begin{split}
\langle
w,\varphi\rangle_{H^{2}(\Omega)/\mathbb{R}}=&\lim\limits_{\epsilon\rightarrow
0}\langle
 w_{\epsilon},\varphi\rangle_{H^{2}(\Omega)/\mathbb{R}}
=\lim\limits_{\epsilon\rightarrow
0}\int_{\Omega}\rho_{\epsilon}\pi^{\sharp,N}
_{\rho_{\epsilon}}[u_{\epsilon}]\pi^{\sharp,N}_{\rho_{\epsilon}}[\varphi]dx \\
=&\lim\limits_{\epsilon\rightarrow
0}\int_{\Omega}\rho_{\epsilon}(\pi^{\sharp,N}
_{\rho_{\epsilon}}[u_{\epsilon}]-\pi^{\sharp,S}
_{1}[u])\pi^{\sharp,N}_{\rho_{\epsilon}}[\varphi]dx \\
&+\lim\limits_{\epsilon\rightarrow
0}\int_{\Omega}\rho_{\epsilon}\pi^{\sharp,S}
_{1}[u](\pi^{\sharp,N}_{\rho_{\epsilon}}[\varphi]-\pi^{\sharp,S}
_{1}[\varphi])dx \\
&+\lim\limits_{\epsilon\rightarrow
0}\int_{\Omega}\rho_{\epsilon}\pi^{\sharp,S}
_{1}[u]\pi^{\sharp,S}_{1}[\varphi]dx \\
=&\frac{M}{|\partial\Omega|}\int_{\partial\Omega}\pi^{\sharp,S}
_{1}[u]\pi^{\sharp,S}_{1}[\varphi]dS \\
=&\left\langle
T^{S}_{\frac{M}{|\partial\Omega|}}u,\varphi\right\rangle_{H^{2}(\Omega)/\mathbb{R}},
\end{split}
\end{equation*}
and therefore  $w=T^{S}_{\frac{M}{|\partial\Omega|}}u$. Similarly,
one has
$\|w_{\epsilon}\|_{H^{2}(\Omega)/\mathbb{R}}\rightarrow\|w\|_{H^{2}(\Omega)/\mathbb{R}}$.
Then
\begin{equation*}
\begin{split}
\lim\limits_{\epsilon\rightarrow
0}\|w_{\epsilon}\|^{2}_{H^{2}(\Omega)/\mathbb{R}}
=&\lim\limits_{\epsilon\rightarrow
0}\int_{\Omega}\rho_{\epsilon}(\pi^{\sharp,N}
_{\rho_{\epsilon}}[u_{\epsilon}]-\pi^{\sharp,S}
_{1}[u])\pi^{\sharp,N}_{\rho_{\epsilon}}[w_{\epsilon}]dx\\
&+\lim\limits_{\epsilon\rightarrow
0}\int_{\Omega}\rho_{\epsilon}\pi^{\sharp,S}
_{1}[u](\pi^{\sharp,N}_{\rho_{\epsilon}}[w_{\epsilon}]-\pi^{\sharp,S}
_{1}[w_{\epsilon}])dx\\
&+\lim\limits_{\epsilon\rightarrow
0}\int_{\Omega}\rho_{\epsilon}\pi^{\sharp,S}
_{1}[u](\pi^{\sharp,S}_{1}[w_{\epsilon}]-\pi^{\sharp,S}_{1}[w])dx\\
&+\lim\limits_{\epsilon\rightarrow
0}\int_{\Omega}\rho_{\epsilon}\pi^{\sharp,S}
_{1}[u]\pi^{\sharp,S}_{1}[w]dx\\
=&\frac{M}{|\partial\Omega|}\int_{\Omega}\pi^{\sharp,S}
_{1}[u]\pi^{\sharp,S}_{1}[w]dS\\
=&\|w\|^{2}_{H^{2}(\Omega)/\mathbb{R}},
\end{split}
\end{equation*}
which finishes the proof of \underline{(R1)}.

Let $u_{\varepsilon}\rightarrow u$ in ${H^{2}(\Omega)/\mathbb{R}}$.
Then there exists some non-negative constant $C''$ such that
 \begin{eqnarray*}
\|u_{\epsilon}\|^{2}_{H^{2}(\Omega)/\mathbb{R}}\leq C{''}
 \end{eqnarray*}
 for all $\epsilon$.
Then, using a similar argument to that of the claim
\underline{(R1)}, for each sequence $\epsilon_{j}\rightarrow 0$,
possibly passing to a subsequence, we have
$T^{N}_{\rho_{\epsilon_{j}}}u_{\epsilon_{j}}\rightarrow
T^{S}_{\frac{M}{|\partial\Omega|}}u$. Since this is true for each
$\{\epsilon_{j}\}_{ j \in\mathbb{N}}$, we have the convergence for
the whole family,
i.e.,$T^{N}_{\rho_{\epsilon}}u_{\epsilon}\rightarrow
T^{S}_{\frac{M}{|\partial\Omega|}}u$, which completes the proof of
\underline{(R2)}.
\end{proof}

In the sequel, we will explore the maximum value of the fundamental
tone of problem (\ref{1-4}) and prove that when $\Omega^{\ast}$ is a
ball such that $|\Omega|=|\Omega^{\ast}|$, this maximum value can be
taken by the corresponding eigenvalue of $\Omega^{\ast}$.

\section{Eigenvalues and eigenfunctions
on the ball}
\renewcommand{\thesection}{\arabic{section}}
\renewcommand{\theequation}{\thesection.\arabic{equation}}
\setcounter{equation}{0}

When $\mathbf{B}$ is a unit ball in $\mathbb{R}^{n}$ centered at the
origin, we would like to characterize  the eigenvalues and the
corresponding eigenfunctions of (\ref{1-4}). We will use spherical
coordinates $(r,\theta)$ to do calculations, with
$\theta=(\theta_{1},\cdot\cdot\cdot,\theta_{n-1})\in\mathbb{S}^{n-1}$
and $\mathbb{S}^{n-1}$ the $(n-1)$-dimensional unit Euclidean
sphere. In fact, the corresponding coordinate transformation should
be
\begin{flalign*}
&x_{1}=r\cos(\theta_{1}),\\
&x_{2}=r\sin(\theta_{1})\cos(\theta_{2}),\\
&.\\
&.\\
&.\\
&x_{n-1}=r\sin(\theta_{1})\sin(\theta_{2})\cdot\cdot\cdot\sin(\theta_{n-2})\cos(\theta_{n-1}),\\
&x_{n}=r\sin(\theta_{1})\sin(\theta_{2})\cdot\cdot\cdot\sin(\theta_{n-2})\sin(\theta_{n-1}).
\end{flalign*}
Here,if $n>2$, then
$\theta_{1},\cdot\cdot\cdot\theta_{n-2}\in[0,\pi],\theta_{n-1}\in[0,
2\pi)$, while if $n=2$, then $\theta_{1}\in[0,2\pi)$. In spherical
coordinates, the boundary condition of problem (\ref{1-4}) can be
written as
\begin{eqnarray} \label{1-1}
 \left\{
\begin{array}{ll}
(1-\sigma)\frac{\partial^{2}u}{\partial
r^{2}}\Big{|}_{r=1}+\sigma\Delta u=0
,\\[2mm]
\tau\frac{\partial u}{\partial
r}-\frac{1-\sigma}{r^{2}}\Delta_{S}(\frac{\partial u}{\partial
r}-\frac{u}{r})-\frac{\partial\Delta u}{\partial
r}\Big{|}_{r=1}=\lambda u |_{r=1},
\end{array}
\right.
\end{eqnarray}
 where $\Delta_{S}$ is the angular part of the Laplacian. Using a
 similar argument to \cite{lmc1,lmc2}, we know that for any
 nonnegative eigenvalue $\lambda$, its eigenfunction can be written as a
product of a radial part and an angular part. Besides, the radial
part of the fundamental tone of the unit ball should be a linear
combination of Bessel functions. As we know, the ultraspherical
Bessel functions $j_{l}(z)$ of the first kind are defined as
\begin{eqnarray*}
j_{l}(z):=z^{1-\frac{n}{2}}J_{\frac{n}{2}-1+l}(z)
\end{eqnarray*}
with the modified Bessel functions $J_{v}$ solving the following
Bessel equation
\begin{eqnarray*}
z^{2}y{''}(z)+zy{'}(z)+(z^{2}-v^{2})y(z)=0.
\end{eqnarray*}
The first and the second kinds of hypersphere modified Bessel
functions $i_{l}(z)$, $k_{l}(z)$ are defined separately as follows
\begin{eqnarray*}
i_{l}(z):=z^{1-\frac{n}{2}}I_{\frac{n}{2}-1+l}(z)
\end{eqnarray*}
and
\begin{eqnarray*}
k_{l}(z):=z^{1-\frac{n}{2}}K_{\frac{n}{2}-1+l}(z),
\end{eqnarray*}
where the modified Bessel functions $I_{v}$ and $K_{v}$ are
solutions to the following modified ultraspherical Bessel equation
\begin{eqnarray*}
z^{2}y{''}(z)+zy{'}(z)+(z^{2}+v^{2})y(z)=0.
\end{eqnarray*}
By \cite[\S 9.6]{as}, one knows that $i_{l}(z)$ and all its
derivatives are positive on $(0,+\infty)$. Based on these facts, we
can prove:

\begin{theorem} \label{theorem3-1}
Let $\Omega$  be the unit ball, centered at the origin, in
$\mathbb{R}^{n}$. Any eigenfunction $u_{l}$ of the problem
(\ref{1-4}) is of the form $u_{l}(r,\theta)=R_{l}(r)Y_{l}(\theta)$,
where $Y_{l}(\theta)$ is a spherical harmonic function of some order
$l\in \mathbb{N}$ and
 \begin{eqnarray*}
R_{l}(r)=A_{l}r^{l}+B_{l}i_{l}(\sqrt{\tau}r),
 \end{eqnarray*}
 where $A_{l}$ and
$B_{l}$ are suitable constants such that
$$B_{l}=\frac{(1-\sigma)l(1-l)A_{l}}{\tau i{''}_{l}(\sqrt{\tau})+\sqrt{\tau}\sigma(n-1)i^{\prime}_{l}(\sqrt{\tau})-\sigma l(l+n-2)i_{l}(\sqrt{\tau})}$$
Moreover, the eigenvalue $\lambda_{(l)}$ associated with the
eigenfunction $u_{l}$ is delivered by formula
\begin{eqnarray}  \label{theorem3-1-1}
\begin{split}
\lambda_{(l)}&=\left(\tau i{''}_{l}(\sqrt{\tau})+\sqrt{\tau}\sigma(n-1)i{'}_{l}(\sqrt{\tau})-\sigma l(l+n-2)i_{l}(\sqrt{\tau})+(1-\sigma)l(1-l)i_{l}(\sqrt{\tau})\right)^{-1}\\
&\quad\Big{\{}-l^{2}(l+n-2)(\sigma\tau+(1-\sigma)(l-1)(\sigma l+\sigma n-3-\sigma))i_{l}(\sqrt{\tau})+\Big{[}\tau\sqrt{\tau}l\cdot\\
&\quad(\sigma n+l\sigma-2\sigma-l+1)+\sqrt{\tau}(1-\sigma)l(l-1)((l+n-2)(\sigma n-\sigma-2l+\sigma l)\\
&\quad -n+1)\Big{]}i{'}_{l}(\sqrt{\tau})+\tau l(\tau+(1-\sigma)(l+2n-3)(l-1))i{''}_{l}(\sqrt{\tau})\\
&\quad
+\tau\sqrt{\tau}(1-\sigma)l(l-1)i{'''}_{l}(\sqrt{\tau})\Big{\}}
\end{split}
\end{eqnarray}
for any $l\in \mathbb{N}$.
\end{theorem}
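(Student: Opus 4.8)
The plan is to exploit the rotational symmetry of the unit ball to separate variables, reducing the fourth-order interior equation to a radial ODE in each spherical-harmonic sector, and then to impose the two boundary conditions of (\ref{1-1}) to fix both the radial profile and the eigenvalue. First I would justify the separated form $u_l=R_l(r)Y_l(\theta)$: since $\Delta^{2}-\tau\Delta$ is built from the rotationally invariant Laplacian and the boundary operators in (\ref{1-4}) are themselves invariant under $O(n)$, every eigenspace is invariant under rotations and hence decomposes over the spaces of spherical harmonics. Thus, exactly as in \cite{lmc1,lmc2}, each eigenfunction may be taken of the form $R_l(r)Y_l(\theta)$ with $Y_l$ a spherical harmonic of order $l$, so that $\Delta_{S}Y_l=-l(l+n-2)Y_l$.

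Next I would solve the interior equation. Factoring $\Delta^{2}-\tau\Delta=\Delta(\Delta-\tau)$, the radial equation in the sector of order $l$ has a four-dimensional solution space spanned by $r^{l}$ and $r^{-(l+n-2)}$ (the two solid-harmonic solutions of $\Delta(\cdot)=0$) together with $i_l(\sqrt{\tau}r)$ and $k_l(\sqrt{\tau}r)$ (the two solutions of $\Delta(\cdot)=\tau(\cdot)$, which satisfy the ultraspherical Bessel equation $z^{2}i_l''+(n-1)z\,i_l'-(z^{2}+l(l+n-2))i_l=0$). Requiring regularity at the origin discards $r^{-(l+n-2)}$ and $k_l$, leaving $R_l(r)=A_l r^{l}+B_l i_l(\sqrt{\tau}r)$, as claimed. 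Two consequences I would record for later use are that $r^{l}Y_l$ is harmonic, so it contributes nothing to $\Delta u$, and that consequently $\Delta u=B_l\tau i_l(\sqrt{\tau}r)Y_l$ throughout $\Omega$.

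I would then impose the first boundary condition. On the ball $\vec{v}=\partial_r$, so $(1-\sigma)\partial_r^{2}u+\sigma\Delta u=0$ at $r=1$ becomes, after inserting $\partial_r^{2}u|_{r=1}=[A_l l(l-1)+B_l\tau i_l''(\sqrt{\tau})]Y_l$ and $\Delta u|_{r=1}=B_l\tau i_l(\sqrt{\tau})Y_l$, a single linear relation between $A_l$ and $B_l$. Solving it and using the Bessel equation to rewrite $(1-\sigma)\tau i_l''+\sigma\tau i_l$ as $\tau i_l''+\sqrt{\tau}\sigma(n-1)i_l'-\sigma l(l+n-2)i_l$ yields precisely the stated expression for $B_l$ in terms of $A_l$.

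Finally I would impose the second boundary condition of (\ref{1-1}) to extract $\lambda_{(l)}$. Evaluating its three terms at $r=1$ gives $\tau R_l'(1)$ from the normal term, the factor $(1-\sigma)l(l+n-2)[R_l'(1)-R_l(1)]$ from $-\tfrac{1-\sigma}{r^{2}}\Delta_{S}(\partial_r u-u/r)$ (the angular Laplacian supplying the factor $-l(l+n-2)$), and $\partial_r\Delta u|_{r=1}$ from the last term. Setting the sum equal to $\lambda R_l(1)$ and solving for $\lambda$ presents $\lambda_{(l)}$ as a ratio whose numerator and $R_l(1)$ are both proportional to $A_l$; after substituting the relation from the previous step, the factor $A_l$ together with the denominator of $B_l$ cancels, leaving a quotient whose denominator is exactly the factor raised to the power $-1$ in (\ref{theorem3-1-1}). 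Expanding $\partial_r\Delta u$ through the full radial form of the Laplacian, rather than through the shortcut $\Delta u=\tau u$, is what introduces the $i_l'''(\sqrt{\tau})$ term, and simplifying the numerator with the Bessel equation and its first derivative produces the stated closed form. The main obstacle is precisely this last step: the computation is long, and matching the numerator to the exact polynomial coefficients in (\ref{theorem3-1-1}) requires careful bookkeeping, repeatedly trading $i_l''$ and $i_l'''$ against lower-order derivatives via the Bessel equation and its derivative; everything preceding it is a routine symmetry-plus-separation argument.
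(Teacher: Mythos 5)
Your proposal is correct and takes essentially the same route as the paper: separate variables on the ball, keep the radial solutions $r^{l}$ and $i_{l}(\sqrt{\tau}r)$ that are regular at the origin, and impose the two boundary conditions of (\ref{1-1}) to get first the relation between $B_{l}$ and $A_{l}$ (via the ultraspherical Bessel equation, exactly as you describe) and then the eigenvalue formula (\ref{theorem3-1-1}), with the $i_{l}'''(\sqrt{\tau})$ term arising, as you note, from differentiating the radial form of the Laplacian rather than using $\Delta\bigl(i_{l}(\sqrt{\tau}r)Y_{l}\bigr)=\tau i_{l}(\sqrt{\tau}r)Y_{l}$. The only superficial difference is at the start, where you justify the separated ansatz by $O(n)$-invariance and by factoring $\Delta^{2}-\tau\Delta=\Delta(\Delta-\tau)$, while the paper argues by cases ($\Delta u=0$ versus $\Delta u\neq0$, substituting $v=\Delta u$) and then uses the first boundary condition together with linear independence of spherical harmonics of distinct orders to force a single order $l$; the boundary-condition computations thereafter are identical.
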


\begin{proof}
It is easy to know that the solution $u$ of the Steklov-type
eigenvalue problem (\ref{1-4}) in the unit ball is smooth (see,
e.g., \cite{ggs}). We divide into the argument into two cases:

\textbf{Case 1}. Assume that $\Delta u=0$.  The Laplace operator in
spherical coordinates can be written as
 \begin{eqnarray*}
\Delta=\partial_{rr}+\frac{n-1}{r}\partial_{r}+\frac{1}{r^{2}}\Delta_{S}.
\end{eqnarray*}
Separating variables so that $u=R(r)Y(\theta)$, we can obtain
\begin{eqnarray} \label{ODE-1}
\begin{split}
R{''}+\frac{n-1}{r}R{'}-\frac{l(l+n-2)}{r^{2}}R=0
\end{split}
\end{eqnarray}
with
\begin{eqnarray} \label{ODE-2}
\begin{split}
\Delta_{S}Y=-l(l+n-2)Y.
\end{split}
\end{eqnarray}
Solving the ODE (\ref{ODE-1}) yields $R(r)=ar^{l}+br^{2-n-l}$, where
$l>0$, $n\geq 2$. Besides, if $l=0$, $n=2$, $R(r)=a+b\log r$. When
$r=0$, if $b\neq0$, then $u$ blows up at $r=0$, and so we have to
impose $b=0$. Besides, the solutions of (\ref{ODE-2}) are the
spherical harmonic functions of order $l$. So, we have
 \begin{eqnarray*}
u(r,\theta)=a_{l}r^{l}Y_{l}(\theta).
 \end{eqnarray*}
for some $l\in\mathbb{N}$.

\textbf{Case 2}. Assume that $\Delta u\neq0$. Let $v=\Delta u$, and
then the Equation $\Delta^{2}u-\tau\Delta u=0$ can be written as
 \begin{eqnarray*}
\Delta v=\tau v.
 \end{eqnarray*}
Similarly, separating variables so that $v=R(r)Y(\theta)$, we have
\begin{eqnarray} \label{ODE-3}
\begin{split}
R{''}+\frac{n-1}{r}R{'}-\frac{l(l+n-2)}{r^{2}}R=\tau R,
\end{split}
\end{eqnarray}
with $Y$ satisfying (\ref{ODE-2}). For the ODE (\ref{ODE-3}), using
a similar argument to that of ODEs (\ref{ODE-1}) and (\ref{ODE-2})
yields that $v$ should be
 \begin{eqnarray*}
v(r,\theta)=b_{l_{1}}i_{l_{1}}(\sqrt{\tau}r)Y_{l_{1}}(\theta),
 \end{eqnarray*}
  for some
$l_{1}\in\mathbb{N}$. Since $v=\frac{\Delta v}{\tau}=\Delta u$, we
have
\begin{eqnarray} \label{add-xx}
\begin{split}
u(r,\theta)=\frac{b_{l_{1}}}{\tau}i_{l_{1}}(\sqrt{\tau}r)Y_{l_{1}}(\theta)-c_{l_{2}}r^{l_{2}}Y_{l_{2}}(\theta),
\end{split}
\end{eqnarray}
where $l_{2}\in\mathbb{N}$. Rewriting the boundary condition
$(1-\sigma)\frac{\partial^{2}u}{\partial
r^{2}}\Big{|}_{r=1}+\sigma\Delta u=0$ as follows
\begin{eqnarray} \label{add-x3}
0&=&b_{l_{1}}i{''}_{l_{1}}(\sqrt{\tau})Y_{l_{1}}(\theta)+\sigma(n-1)\frac{b_{l_{1}}}{\sqrt{\tau}}i{'}_{l_{1}}(\sqrt{\tau})
Y_{l_{1}}(\theta)-\sigma\frac{b_{l_{1}}}{\tau}i_{l_{1}}(\sqrt{\tau})l_{1}(l_{1}+n-2)Y_{l_{1}}(\theta)\nonumber\\
&&-l_{2}(l_{2}-1)c_{l_{2}}Y_{l_{2}}(\theta)-\sigma(n-1)l_{2}c_{l_{2}}Y_{l_{2}}(\theta)+\sigma c_{l_{2}}l_{2}(l_{2}+n-2)Y_{l_{2}}(\theta)\nonumber\\
&=&\left[\frac{l_{1}(l_{1}-1)(1-\sigma)}{\tau}i_{l_{1}}(\sqrt{\tau})+\frac{2l_{1}+\sigma(n-1)+1}{\sqrt{\tau}}i_{l_{1}+1}(\sqrt{\tau})+i_{l_{1}+2}(\sqrt{\tau})\right]
b_{l_{1}}Y_{l_{1}}(\theta)\nonumber\\
&&-c_{l_{2}}l_{2}(l_{2}-1)(1-\sigma)Y_{l_{2}}(\theta).
\end{eqnarray}
We recall that $i_{l}(z)$ and all its derivatives are positive on
$(0,+\infty)$. Combining the fact that $Y_{l_{1}}$ and $Y_{l_{2}}$
are linearly independent, we can easily obtain that $b_{l_{1}}=0$
and $l_{2}=0$ or $l_{2}=1$. Then, together with (\ref{add-xx}), it
follows that
\begin{eqnarray*}
\begin{split}
u_{l}(r,\theta)=[-c_{l}r^{l}+\frac{b_{l}}{\tau}i_{l}(\sqrt{\tau}r)]Y_{l}(\theta).
\end{split}
\end{eqnarray*}
Let $A_{l}=-c_{l}$, $B_{l}=\frac{b_{l}}{\tau}$, and then
$u_{l}(r,\theta)$ can be rewritten as
\begin{eqnarray} \label{add-x4}
\begin{split}
u_{l}(r,\theta)=[A_{l}r^{l}+B_{l}i_{l}(\sqrt{\tau}r)]Y_{l}(\theta).
\end{split}
\end{eqnarray}
By (\ref{add-x3}) and a direct calculation, one has
\begin{eqnarray}  \label{add-x5}
\begin{split}
B_{l}=-\frac{(1-\sigma)l(l-1)}{\tau i{''}_{l}(\sqrt{\tau})
+\sqrt{\tau}\sigma(n-1)i{'}_{l}(\sqrt{\tau})-\sigma
l(l+n-2)i_{l}(\sqrt{\tau})}A_{l}.
\end{split}
\end{eqnarray}
We know that $u_{l}$ given by (\ref{add-x4}) is an eigenfunction of
the eigenvalue problem (\ref{1-4}) on the unit ball. Combining the
boundary condition
 \begin{eqnarray*}
\tau\frac{\partial u}{\partial
r}-\frac{1-\sigma}{r^{2}}\Delta_{S}\left(\frac{\partial u}{\partial
r}-\frac{u}{r}\right)-\frac{\partial\Delta u}{\partial
r}\Big{|}_{r=1}=\lambda u |_{r=1}
 \end{eqnarray*}
 (\ref{add-x4}) yields
\begin{eqnarray*}
\begin{split}
&A_{l}l[\tau+(1-\sigma)(l+n-2)(l-1)]+B_{l}\{-l(l+n-2)(3-\sigma)i_{l}(\sqrt{\tau})+\sqrt{\tau}[\tau+\\
&(2-\sigma)l(l+n-2)+n-1]i^{\prime}_{l}(\sqrt{\tau})-\tau(n-1)i{''}_{l}(\sqrt{\tau})-
\tau\sqrt{\tau}i{'''}(\sqrt{\tau})\}\\
=&\lambda_{(l)}[A_{l}+B_{l}i_{l}(\sqrt{\tau})]
\end{split}
\end{eqnarray*}
The conclusion (\ref{theorem3-1-1}) follows directly by substituting
(\ref{add-x5}) into the above equality.
\end{proof}

Now, we can give the characterization to the first nonzero
eigenvalue and also its eigenfunctions.

\begin{theorem} \label{theorem3-2}
Let $\Omega=\mathbf{B}_{1}$ be the unit ball in $\mathbb{R}^{n}$
centered at the origin. The first positive eigenvalue of the
eigenvalue problem (\ref{1-4}) is $\lambda_{2}=\lambda_{(1)}=\tau$.
The corresponding eigenspace is generated by
$\{x_1,x_2,...,x_{n}\}$.
\end{theorem}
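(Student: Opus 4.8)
The plan is to establish the value $\lambda_{(1)}=\tau$ together with its eigenspace, and then, separately, that $\tau$ is the smallest positive eigenvalue, so that $\lambda_2=\lambda_{(1)}=\tau$. For the value I would read off the $l=1$ case of Theorem~\ref{theorem3-1}. Since the factor $l(l-1)$ vanishes at $l=1$, the coefficient formula (\ref{add-x5}) gives $B_1=0$, so the order-one eigenfunctions reduce to their solid-harmonic part $R_1(r)=A_1 r$. The most transparent check is to verify directly that each coordinate function $u=x_i$ solves (\ref{1-4}) with $\lambda=\tau$: writing $x_i=r\,g(\theta)$ one has $\Delta u=0$, $D^2u=0$, $\partial_{rr}u=0$ and $\partial_r u=u/r$ on $\partial\mathbf{B}_1$, so the first boundary condition in (\ref{1-1}) holds trivially, the tangential term $\frac{1-\sigma}{r^2}\Delta_S(\partial_r u-u/r)$ vanishes, and the second condition collapses to $\tau\,\partial_r u=\lambda u$, i.e. $\lambda=\tau$. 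Alternatively, substituting $l=1$ into (\ref{theorem3-1-1}) and using that every term carrying a factor $(l-1)$ drops out, one finds that the curly bracket equals $\tau$ times the leading parenthesis, which again yields $\lambda_{(1)}=\tau$. Since the degree-one spherical harmonics are spanned by $x_i/r$, the $\lambda_{(1)}$-eigenspace is $\mathrm{span}\{x_1,\dots,x_n\}$.

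It remains to show $\tau$ is the first positive eigenvalue. Rather than comparing the Bessel expressions $\lambda_{(l)}$ for $l\ge2$ term by term, I would argue variationally. By the characterization of $\lambda_2$ recorded in Section~\ref{sec2} (with $\rho\equiv1$), for any $0\neq u\in H^2(\mathbf{B}_1)$ with $\int_{\partial\mathbf{B}_1}u\,dS=0$ one has, after discarding a nonnegative term,
\begin{equation*}
\frac{\int_{\mathbf{B}_1}\big[(1-\sigma)|D^2u|^2+\sigma|\Delta u|^2+\tau|\nabla u|^2\big]\,dx}{\int_{\partial\mathbf{B}_1}u^2\,dS}\ \ge\ \tau\,\frac{\int_{\mathbf{B}_1}|\nabla u|^2\,dx}{\int_{\partial\mathbf{B}_1}u^2\,dS}.
\end{equation*}
The nonnegativity $(1-\sigma)|D^2u|^2+\sigma|\Delta u|^2\ge0$ used here is exactly the one established in the proof of Theorem~\ref{maintheorem-1} from $\sigma\in(-1/(n-1),1)$ and the sharp bound $(\Delta u)^2\le n|D^2u|^2$. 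The quotient on the right, minimized over all $H^1$ functions with zero boundary mean, is the first nonzero Steklov eigenvalue of the Laplacian on the unit ball, which equals $1$; hence the left-hand side is $\ge\tau$, and taking the infimum gives $\lambda_2\ge\tau$. Combined with the exact eigenpair of the first step this forces $\lambda_2=\tau$.

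The same chain identifies the eigenspace intrinsically: equality $\lambda_2=\tau$ requires the discarded term to vanish, so $(1-\sigma)|D^2u|^2+\sigma|\Delta u|^2=0$ a.e., which under $\sigma\in(-1/(n-1),1)$ forces $D^2u=0$; thus $u$ is affine, and the zero boundary-mean condition kills the constant, leaving $u\in\mathrm{span}\{x_1,\dots,x_n\}$. I expect the lower bound to be the delicate point: it hinges on invoking the variational characterization of the first Steklov eigenvalue over \emph{all} $H^1$ functions of zero boundary mean (not only harmonic ones) and on knowing this eigenvalue equals $1$ for the ball. The alternative of proving $\lambda_{(l)}\ge\tau$ for every $l\ge2$ directly from (\ref{theorem3-1-1}) would instead demand monotonicity estimates for the modified Bessel functions $i_l$ and their logarithmic derivatives, which is substantially more technical.
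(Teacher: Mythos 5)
Your proposal is correct, but it takes a genuinely different route from the paper at the crucial step. The paper proves that $\tau$ is the smallest positive eigenvalue by explicit computation: it substitutes the recursion relations for the modified Bessel functions $i_l$ into formula (\ref{theorem3-1-1}), reorganizes numerator and denominator in terms of $i_{l+1},i_{l+2},i_{l+3}$, and concludes $\lambda_{(l)}-\lambda_{(1)}>0$ for all $l\geq 2$ from the positivity of the $i_l$ and their derivatives together with $\sigma\in(-1/(n-1),1)$ --- precisely the Bessel monotonicity estimates you chose to avoid. You instead exhibit the coordinate functions $x_i$ as explicit eigenfunctions with eigenvalue $\tau$ (a direct check, consistent with the $l=1$ case of Theorem \ref{theorem3-1}, where $B_1=0$), and obtain the matching lower bound $\lambda_2\geq\tau$ variationally: drop the pointwise nonnegative term $(1-\sigma)|D^2u|^2+\sigma|\Delta u|^2$ (the same nonnegativity the paper establishes in the proof of Theorem \ref{maintheorem-1}) and compare the remaining quotient with the first nonzero Steklov eigenvalue of the Laplacian on $\mathbf{B}_1$, which equals $1$. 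This is sound, with two provisos worth making explicit: the Steklov fact must be invoked with its variational characterization over \emph{all} $H^1(\mathbf{B}_1)$ functions of zero boundary mean, not only harmonic ones (true and classical --- it is the statement underlying \cite{bro} --- but external to this paper and so it should be cited), and trial functions must be understood to have nonvanishing boundary trace. What your route buys: no Bessel machinery at all, and a rigorous identification of the eigenspace through the equality case ($D^2u=0$ forces $u$ affine, and zero boundary mean kills the constant), which is actually tighter than the paper's concluding appeal to the fact that eigenfunctions of the form $R_l(r)Y_l(\theta)$ achieve the infimum in (\ref{add-x6}). What the paper's route buys in exchange: closed-form expressions for every $\lambda_{(l)}$, $l\geq 2$, in terms of $i_{l+1}$ and $i_{l+2}$, quantitative information about the higher spectrum that your comparison argument cannot recover.
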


\begin{proof}
By Theorem \ref{theorem3-1}, we can get
$0=\lambda_{(0)}<\tau=\lambda_{(1)}$.  Recall the recursive relation
of some known hypersphere Bessel functions (see, e.g., \cite[p.
376]{as})
\begin{flalign*}
&i_l(z)=\frac{2+2l}{z}i_{l+1}(z)+i_{l+2}(z)\\
&i_{l}{'}(z)=\frac{l}{z}i_{l}(z)+i_{l+1}(z),\\
&i_{l}{''}(z)=\frac{l(l-1)}{z^{2}}i_{l}(z)+\frac{2l+1}{z}i_{l+1}(z)+i_{l+2}(z),\\
&i_{l}{'''}(z)=\frac{l(l-1)(l-2)}{z^{3}}i_{l}(z)+\frac{3l^{2}}{z^{2}}i_{l+1}(z)+\frac{3l+3}{z}i_{l+2}(z)+i_{l+3}(z).
\end{flalign*}
Let $D_{(l)}$ be the denominator of the RHS of (\ref{theorem3-1-1})
and $N_{(l)}$ be the corresponding numerator, i.e.,
$\lambda_{(l)}=\frac{N_{(l)}}{D_{(l)}}$. By the recursive formula,
we can calculate the denominator $D_{(l)}$ as follows
\begin{eqnarray*}
\begin{split}
D_{(l)}=&\tau i{''}_{l}(\sqrt{\tau})+\sqrt{\tau}\sigma(n-1)i{'}_l(\sqrt{\tau})-\sigma l(l+n-2)i_l(\sqrt{\tau})-(1-\sigma)l(l-1)i_l(\sqrt{\tau})\\
=&\tau\left(\frac{(l-1)l}{\tau}i_{l}(\sqrt{\tau})+\frac{2l+1}{\sqrt{\tau}}i_{l+1}(\sqrt{\tau})+i_{l+2}(\sqrt{\tau})\right)+\sqrt{\tau}\sigma(n-1)\left(\frac{l}{\sqrt{\tau}}
i_l(\sqrt{\tau})+i_{l+1}(\sqrt{\tau})\right)\\
&-\sigma l(l+n-2)i_l(\sqrt{\tau})-(1-\sigma)l(l-1)i_l(\sqrt{\tau})\\
=&\sqrt{\tau}(2l+\sigma n+1-\sigma)i_{l+1}(\sqrt{\tau})+\tau
i_{l+2}(\sqrt{\tau}).
\end{split}
\end{eqnarray*}
Moreover, the numerator $N_{(l)}$ can be computed as follows
\begin{eqnarray*}
\begin{split}
N_{(l)}=&-l^{2}(l+n-2)(\sigma\tau+(1-\sigma)(l-1)(\sigma l+\sigma n-3-\sigma))i_{l}(\sqrt{\tau})+\Big{[}\tau\sqrt{\tau}l\cdot\\
&(\sigma n+l\sigma-2\sigma-l+1)+\sqrt{\tau}(1-\sigma)l(l-1)((l+n-2)(\sigma n-\sigma-2l+\sigma l)-n+1)\Big{]}\cdot \\
&i{'}_{l}(\sqrt{\tau})+\tau l(\tau+(1-\sigma)(l+2n-3)(l-1))i{''}_{l}(\sqrt{\tau})+\tau\sqrt{\tau}(1-\sigma)l(l-1)i{'''}_{l}(\sqrt{\tau})\\
=&-l^{2}(l+n-2)(\sigma\tau+(1-\sigma)(l-1)(\sigma l+\sigma n-3-\sigma))i_{l}(\sqrt{\tau})+\\
&\qquad \Big{[}\tau\sqrt{\tau}l(\sigma n+l\sigma-2\sigma-l+1)\\
&+\sqrt{\tau}(1-\sigma)l(l-1)((l+n-2)(\sigma n-\sigma-2l+\sigma l)-n+1)\Big{]}\left(\frac{l}{\sqrt{\tau}}i_{l}(\sqrt{\tau})+i_{l+1}(\sqrt{\tau})\right)\\
&+\tau l(\tau+(1-\sigma)(l+2n-3)(l-1))\left(\frac{l(l-1)}{\tau}i_{l}(\sqrt{\tau})+\frac{2l+1}{\sqrt{\tau}}i_{l+1}(\sqrt{\tau})+i_{l+2}(\sqrt{\tau})\right)\\
&+\tau\sqrt{\tau}(1-\sigma)l(l-1)\Bigg{(}\frac{l(l-1)(l-2)}{\tau\sqrt{\tau}}i_{l}(\sqrt{\tau})+\frac{3l^{2}}{\tau}i_{l+1}
(\sqrt{\tau})+\\
&\qquad
 \frac{3l+3}{\sqrt{\tau}}i_{l+2}(\sqrt{\tau})+i_{l+3}(\sqrt{\tau})\Bigg{)}\\
=&[\tau\sqrt{\tau} l(\sigma n+\sigma l-2\sigma+l+2)+\sqrt{\tau}(1-\sigma)l(l-1)((l+n-2)(\sigma n-\sigma+\sigma l+1)+\\
&3l^2+2nl-2l)]i_{l+1}(\sqrt{\tau})+[\tau
l(\tau+(1-\sigma)(l-1)(4l+2n))]i_{l+2}(\sqrt{\tau})+\\
&\qquad \tau\sqrt{\tau}(1-\sigma)l(l-1)i_{l+3}(\sqrt{\tau}).
\end{split}
\end{eqnarray*}
Then applying the recursion formula again and the conclusion
(\ref{theorem3-1-1}), we have
\begin{eqnarray*}
\begin{split}
\lambda_{(l)}=&\frac{N_{(l)}}{D_{(l)}}
=\left(\sqrt{\tau}(2l+\sigma n+1-\sigma)i_{l+1}(\sqrt{\tau})+\tau i_{l+2}(\sqrt{\tau})\right)^{-1}\Big{\{}[\tau\sqrt{\tau} l(\sigma n+\sigma l-2\sigma+l+2)\\
&+\sqrt{\tau}(1-\sigma)l(l-1)((l+n-2)(\sigma n-\sigma+\sigma l+1)+3l^2+2nl-2l)]i_{l+1}(\sqrt{\tau})+[\tau l(\tau\\
&+(1-\sigma)(l-1)(4l+2n))]i_{l+2}(\sqrt{\tau})+\tau\sqrt{\tau}(1-\sigma)l(l-1)i_{l+3}(\sqrt{\tau})\Big{\}}\\
=&\left(2l+\sigma n+1-\sigma)i_{l+1}(\sqrt{\tau})+\sqrt{\tau} i_{l+2}(\sqrt{\tau})\right)^{-1}\Big{\{}[\tau l(\sigma n-\sigma+2l+1+\sigma l-\sigma-l+1)\\
&+(1-\sigma)l(l-1)((l+n-2)(\sigma n-\sigma+\sigma l+1)+3l^2+2nl-2l)]i_{l+1}(\sqrt{\tau})\\
&+\sqrt{\tau} l(1-\sigma)(l-1)(4l+2n)i_{l+2}(\sqrt{\tau})+\tau\sqrt{\tau} li_{l+2}(\sqrt{\tau})+\tau(1-\sigma)l(l-1)i_{l+3}(\sqrt{\tau})\Big{\}}\\
=&\tau l+\left((2l+\sigma n+1-\sigma)i_{l+1}(\sqrt{\tau})+\sqrt{\tau} i_{l+2}(\sqrt{\tau})\right)^{-1}\Big{\{}[\tau l(\sigma l-\sigma-l+1)\\
&+(1-\sigma)l(l-1)((l+n-2)(\sigma n-\sigma+\sigma l+1)+3l^2+2nl-2l)]i_{l+1}(\sqrt{\tau})\\
&+\sqrt{\tau} l(1-\sigma)(l-1)(4l+2n)i_{l+2}(\sqrt{\tau})+\tau(1-\sigma)l(l-1)i_{l+3}(\sqrt{\tau})\Big{\}}\\
=&\tau l+\left((2l+\sigma n+1-\sigma)i_{l+1}(\sqrt{\tau})+\sqrt{\tau} i_{l+2}(\sqrt{\tau})\right)^{-1}\Big{\{}[\tau l(\sigma l-\sigma-l+1)\\
&+(1-\sigma)l(l-1)((l+n-2)(\sigma n-\sigma+\sigma l+1)+3l^2+2nl-2l)]i_{l+1}(\sqrt{\tau})+\\
&\sqrt{\tau}(2l+n-2)(l-1)l(1-\sigma)i_{l+2}(\sqrt{\tau})\Big{\}}.
\end{split}
\end{eqnarray*}
Hence, for $l\geq 2$, one has
\begin{eqnarray*}
\begin{split}
\lambda_{(l)}-\lambda_{(1)}=&\tau l+\left((2l+\sigma n+1-\sigma)i_{l+1}(\sqrt{\tau})+\sqrt{\tau} i_{l+2}(\sqrt{\tau})\right)^{-1}\Big{\{}[\tau l(\sigma l-\sigma-l+1)\\
&+(1-\sigma)l(l-1)((l+n-2)(\sigma n-\sigma+\sigma l+1)+3l^2+2nl-2l)]i_{l+1}(\sqrt{\tau})+\\
&\sqrt{\tau}(2l+n-2)(l-1)l(1-\sigma)i_{l+2}(\sqrt{\tau})\Big{\}}-\tau\\
&=\tau(l-1)+\left((2l+\sigma n+1-\sigma)i_{l+1}(\sqrt{\tau})+\sqrt{\tau} i_{l+2}(\sqrt{\tau})\right)^{-1}\Big{\{}[\tau l(\sigma l-\sigma-l+1)\\
&+(1-\sigma)l(l-1)((l+n-2)(\sigma n-\sigma+\sigma l+1)+3l^2+2nl-2l)]i_{l+1}(\sqrt{\tau})+\\
&\sqrt{\tau}(2l+n-2)(l-1)l(1-\sigma)i_{l+2}(\sqrt{\tau})\Big{\}}.
\end{split}
\end{eqnarray*}
Since $\sigma\in(-\frac{1}{n-1},1)$, it is easy to have
$\lambda_{(l)}-\lambda_{(1)}>0$, which implies
 \begin{eqnarray*}
\lambda_{(l)}>\lambda_{(1)}=\tau>0.
 \end{eqnarray*}
Therefore, we obtain that the first nonzero eigenvalue $\lambda_{2}$
of the problem (\ref{1-4}) is $\lambda_{(1)}$. For each
$l\in\mathbb{N}$, when $\Omega=\mathbf{B}_{1}$ is the unit ball in
$\mathbb{R}^{n}$ centered at the origin, one knows
\begin{eqnarray} \label{add-x6}
\lambda_{(l)}=\inf\frac{\int_{\mathbf{B}_{1}}\left[(1-\sigma)|D^2u|^2+\sigma|\Delta
u|^2+\tau|\nabla
u|^2\right]dx}{\int_{\partial\mathbf{B}_{1}}u^{2}dS},
\end{eqnarray}
where the infimum is taken among all functions $u$ that are
$L^2(\partial B)$-orthogonal to the first $m-1$ eigenfunctions
$u_i$, with $m\in\mathbb{ N}$, such that $\lambda_{(l)}=\lambda_m$
is the $m$-th eigenvalue of the eigenvalue problem (\ref{1-4}).
Then, after sorting $\lambda_{(l)}$, it is not hard to obtain a
non-negative spectrum of eigenvalues of the problem (\ref{1-4}). As
 we all known, eigenfunctions of the form
 $u_{l}=R_l(r)Y_{l}(\theta)$ should
achieve the infimum in (\ref{add-x6}). This completes the proof of
Theorem \ref{theorem3-2}.
\end{proof}

\section{The isoperimetric inequality}
\renewcommand{\thesection}{\arabic{section}}
\renewcommand{\theequation}{\thesection.\arabic{equation}}
\setcounter{equation}{0}

The so-called \emph{Fraenkel asymmetry} is the following: for any
open set $\Omega\in\mathbb{R}^n$ with finite measure,
 \begin{eqnarray*}
\mathcal{A}(\Omega):=\inf\left\{\frac{\|\chi_{\Omega}-\chi_{\mathbf{B}}\|_{L^1(\mathbb{R}^n)}}{|\Omega|}\Bigg{|}\mathbf{B}~\mathrm{
is ~the~ ball~ with~ } |B|=|\Omega|\right\},
 \end{eqnarray*}
  where $\mathcal{A}(\Omega)$ is the
distance in the $L^1(\mathbb{R}^n)$ norm of a set $\Omega$ from the
set of all balls of the same measure as $\Omega$. This quantity
turns out to be a suitable distance between sets for the purposes of
stability estimates of eigenvalues.

 In order to prove Theorem \ref{maintheorem}, we need the following
 two facts:

\begin{lemma} \label{lemma4-1}
(\cite{bpr}) Let $\Omega$ be an open set with Lipschitz boundary and $p>1$. Then
 \begin{eqnarray*}
\int_{\partial\Omega}|x|^pdS\geq\int_{\partial\Omega^{\ast}}|x|^pdS\left(1+c_{n,p}\left(\frac{|\Omega\Delta\Omega^{\ast}|}{|\Omega|}^2\right)\right),
 \end{eqnarray*}
where $\Omega^{\ast}$ is the ball centered at zero with the same
measure as $\Omega$, $\Omega\Delta\Omega^{\ast}$ is the symmetric
difference of $\Omega$ and $\Omega^{\ast}$, and $c_{n,p}$ is a
constant depending only on $n$ and $p$ given by
 \begin{eqnarray*}
c_{n,p}:=\frac{(n+p-1)(p-1)}{4}\frac{\sqrt[n]{2}-1}{n}\left(\mathop{\min}\limits_{t\in[1,\sqrt[n]{2}]}t^{p-1}\right).
 \end{eqnarray*}
\end{lemma}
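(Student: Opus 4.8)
The plan is to prove this quantitative weighted isoperimetric inequality in two independent steps: first bound the weighted boundary integral from below by a weighted \emph{volume} integral through a calibration argument, and then compare that volume integral with the one on the centered ball by a quantitative rearrangement estimate. Throughout, $\Omega^{\ast}$ is the ball of radius $R$ centered at the origin with $\omega_n R^n=|\Omega|$, where $\omega_n=|B_1|$.

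For the first step I would introduce the radial vector field $X(x):=|x|^{p-1}x$, which (away from the origin, and as an $L^1_{\mathrm{loc}}$ identity valid for $p>1$) satisfies $\mathrm{div}\,X=(n+p-1)|x|^{p-1}$. Applying the divergence theorem on $\Omega$ and using $X\cdot\nu\le|X|=|x|^{p}$ on $\partial\Omega$, where $\nu$ is the outward unit normal, yields
\[
\int_{\partial\Omega}|x|^{p}\,dS\;\ge\;\int_{\partial\Omega}X\cdot\nu\,dS\;=\;(n+p-1)\int_{\Omega}|x|^{p-1}\,dx,
\]
with equality exactly when $\nu(x)$ is parallel to $x$ for a.e.\ boundary point, i.e.\ for balls centered at the origin. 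In particular equality holds for $\Omega^{\ast}$, so $\int_{\partial\Omega^{\ast}}|x|^{p}dS=(n+p-1)\int_{\Omega^{\ast}}|x|^{p-1}dx$, and it suffices to prove the volume comparison $\int_{\Omega}|x|^{p-1}dx\ge\bigl(1+c_{n,p}(|\Omega\Delta\Omega^{\ast}|/|\Omega|)^2\bigr)\int_{\Omega^{\ast}}|x|^{p-1}dx$ and then divide.

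For the second step, set $m:=\tfrac12|\Omega\Delta\Omega^{\ast}|=|\Omega\setminus\Omega^{\ast}|=|\Omega^{\ast}\setminus\Omega|$ and define $R_\pm$ by $\omega_n R_\pm^{\,n}=|\Omega|\pm m$. Since $|x|^{p-1}$ is radially increasing, the bathtub principle shows that among subsets of $\{|x|\ge R\}$ of measure $m$ the annulus $\{R\le|x|\le R_+\}$ minimizes $\int|x|^{p-1}dx$, while among subsets of $\{|x|\le R\}$ of measure $m$ the annulus $\{R_-\le|x|\le R\}$ maximizes it. Writing the excess as $\int_{\Omega\setminus\Omega^{\ast}}|x|^{p-1}dx-\int_{\Omega^{\ast}\setminus\Omega}|x|^{p-1}dx$ and applying these two bounds reduces it to a one-dimensional radial quantity,
\[
\int_{\Omega}|x|^{p-1}dx-\int_{\Omega^{\ast}}|x|^{p-1}dx\;\ge\;\frac{n\omega_n}{n+p-1}\Bigl(R_+^{\,n+p-1}+R_-^{\,n+p-1}-2R^{\,n+p-1}\Bigr).
\]
After the substitution $v=r^{n}$, the right-hand side is a second difference of the convex function $v\mapsto v^{(n+p-1)/n}$ at $v=R^n$ with increment $m/\omega_n$, and a second-order (convexity) estimate produces a term quadratic in $m$, hence quadratic in $|\Omega\Delta\Omega^{\ast}|$.

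The main obstacle is extracting the \emph{sharp} constant $c_{n,p}$ from this final estimate rather than merely the sign of the quadratic term. The bound depends on the minimum of the second derivative of $v^{(n+p-1)/n}$ over the radial interval $[R_-,R_+]$, and since $m$ may be as large as $|\Omega|$ the outer radius ranges over $[R,2^{1/n}R]$; normalizing $r=Rt$ is precisely where the factors $(\sqrt[n]{2}-1)/n$ and $\min_{t\in[1,\sqrt[n]{2}]}t^{p-1}$ appearing in $c_{n,p}$ come from. Care is needed so that the convexity estimate does not dissipate these factors, and the Lipschitz regularity of $\partial\Omega$ must be invoked both to justify the divergence theorem for $X$ and to guarantee that the boundary and volume integrals involved are finite.
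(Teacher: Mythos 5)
You should first know what you are being compared against: the paper contains no proof of this lemma at all — it is quoted, with attribution, from the reference [bpr] of Brasco, De Philippis and Ruffini — so the only meaningful comparison is with the proof in that reference. Your two-step plan is precisely the strategy used there: the calibration $\int_{\partial\Omega}|x|^p\,dS\geq\int_{\partial\Omega}X\cdot\vec{v}\,dS=(n+p-1)\int_{\Omega}|x|^{p-1}\,dx$ with $X=|x|^{p-1}x$ (equality for balls centered at the origin), followed by a quantitative radial comparison of $\int_{\Omega}|x|^{p-1}\,dx$ with $\int_{\Omega^{\ast}}|x|^{p-1}\,dx$. Everything you write down through the bathtub step and the second-difference reduction, namely $\int_{\Omega}|x|^{p-1}dx-\int_{\Omega^{\ast}}|x|^{p-1}dx\geq\frac{n\omega_n}{n+p-1}\bigl(R_+^{\,n+p-1}+R_-^{\,n+p-1}-2R^{\,n+p-1}\bigr)$ with $\omega_n=|B_1|$, $\omega_nR_\pm^{\,n}=|\Omega|\pm m$, $m=\tfrac12|\Omega\Delta\Omega^{\ast}|$, is correct.

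The genuine gap is the endgame, which you flag but leave open, and which in the form you propose would fail: the lemma asserts a \emph{specific} constant, so a proof must actually produce it. Write $g(v)=v^{\gamma}$, $\gamma=\frac{n+p-1}{n}$, $v=R^n$, $h=m/\omega_n$; your proposed bound is the second difference $\geq h^2\min_{[v-h,v+h]}g''$. Two concrete problems. First, if $p>n+1$ then $g''(w)=\frac{(n+p-1)(p-1)}{n^2}w^{(p-1-n)/n}$ is \emph{increasing} and vanishes at $w=0$; since $m$ may equal $|\Omega|$ (e.g. $\Omega$ disjoint from $\Omega^{\ast}$), one can have $R_-=0$, so your minimum is $0$ and the estimate collapses — you track only the outer endpoint $R_+\le 2^{1/n}R$, but in this regime the inner endpoint is the dangerous one. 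The repair is to use convexity one-sidedly, $g(v-h)\geq g(v)-g'(v)h$, expanding to second order only on $[v,v+h]$. Second, even in the safe range $1<p\le n+1$ (where $g''$ is decreasing and the minimum sits at $v+h\le 2v$), the uniform constant your method yields is $c=\frac{(n+p-1)(p-1)}{4n^2}\,2^{(p-1-n)/n}$, which for $p$ near $1$ is roughly $\frac{(n+p-1)(p-1)}{8n^2}$ and is \emph{strictly smaller} than the stated $c_{n,p}=\frac{(n+p-1)(p-1)}{4n}\,(2^{1/n}-1)$, because $n(2^{1/n}-1)\geq\ln 2>\tfrac12$; so as sketched you prove the inequality with some constant, but not the one claimed. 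Two ways to close this: (i) write the second difference as $\gamma(\gamma-1)\int_0^h\int_{-s}^{s}(v+t)^{\gamma-2}\,dt\,ds$ and, for $\gamma\le2$, apply Jensen's inequality to the convex map $t\mapsto(v+t)^{\gamma-2}$ to recover the full $\gamma(\gamma-1)v^{\gamma-2}h^2$, which dominates the stated constant since $2^{1/n}-1\le\tfrac1n$; or (ii) follow [bpr] and work in the radial variable on the outer annulus only, using $r^{p-1}-R^{p-1}\geq(p-1)(r-R)\min_t t^{p-2}$ together with the chord bound $R_+-R\geq(2^{1/n}-1)m/(\omega_nR^{n-1})$, which is where the factors $(2^{1/n}-1)$ and the minimum in $c_{n,p}$ actually come from. (In this connection, the exponent in the paper's $\min_{t\in[1,2^{1/n}]}t^{p-1}$ is surely a misprint: that minimum equals $1$ for every $p>1$, whereas the exponent $p-2$ appearing in [bpr] is nontrivial exactly when $1<p<2$. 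Note also that the paper only ever uses $p=2$, which always lies in the safe range $p\le n+1$.)
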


Using a similar argument to that of \cite[Theorem 1]{hx}, we can get
the following result.

\begin{lemma}  \label{lemma4-2}
Let $\Omega$ be a bounded domain of class $C^1$ in $\mathbb{R}^n$.
Then the eigenvalues of problem (\ref{1-4}) on $\Omega$ satisfy
\begin{eqnarray}
\begin{split}
\sum_{l=k+1}^{k+n}\frac{1}{\lambda_l(\Omega)}=\max\left\{\sum_{l=k+1}^{k+n}\int_{\partial\Omega}v_{l}^{2}dS\right\}.
\end{split}
\end{eqnarray}
Moreover, if the families $\{v_{l}\}^{l=k+n}_{l=k+1}$ satisfy
 \begin{eqnarray*}
 \int_{\Omega}\left[(1-\sigma)D^2v_i : D^2v_j +\sigma\Delta v_i\cdot\Delta v_j+\tau\nabla v_i\cdot\nabla v_j\right]dx=\delta_{ij}
 \end{eqnarray*}
 and $\int_{\partial\Omega} v_i u_jdS=0$
 for all $i=k+1,\ldots,k+n$ in $H^2(\Omega)$,
 where $u_1,u_2,\ldots,u_k$ are the first $k$ eigenfunctions of the problem (\ref{1-4}), then the maximum value can be attained.
\end{lemma}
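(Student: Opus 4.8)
The plan is to recast the claim as a constrained trace-maximization of Ky Fan type for the compact self-adjoint operator $T:=T^{S}_{1}$ on the Hilbert space $H:=H^{2}(\Omega)/\mathbb{R}$ introduced in Section \ref{sec2} (here $\rho\equiv 1$, since problem (\ref{1-4}) carries $\lambda u$ on the boundary), and then to read the formula off from the spectral decomposition of $T$. Recall from the proof of Theorem \ref{maintheorem-1} that $\langle Tu,v\rangle_{H}=\int_{\partial\Omega}\pi^{\sharp,S}_{1}[u]\,\pi^{\sharp,S}_{1}[v]\,dS$ and that the positive eigenvalues of $T$ are exactly the reciprocals $\lambda_{l}^{-1}$ of the nonzero eigenvalues of (\ref{1-4}). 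Normalizing the eigenfunctions so that $\langle u_{i},u_{j}\rangle_{H}=\delta_{ij}$ in the energy form (\ref{5}), we get $\int_{\partial\Omega}u_{i}u_{j}\,dS=\langle Tu_{i},u_{j}\rangle_{H}=\lambda_{i}^{-1}\delta_{ij}$; in particular, on the boundary-mean-zero subspace $H^{2,S}_{1}(\Omega)$ we have $\int_{\partial\Omega}v^{2}\,dS=\langle Tv,v\rangle_{H}$, so the quantity to be maximized is precisely $\sum_{l=k+1}^{k+n}\langle Tv_{l},v_{l}\rangle_{H}$.

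The next step is to translate the two constraints into the language of $T$. The normalization $\langle v_{i},v_{j}\rangle_{H}=\delta_{ij}$ says that $\{v_{l}\}_{l=k+1}^{k+n}$ is orthonormal in $H$. For the boundary-orthogonality constraint, since each $u_{j}$ is an eigenfunction ($Tu_{j}=\lambda_{j}^{-1}u_{j}$), self-adjointness of $T$ gives
\begin{equation*}
\int_{\partial\Omega}v_{i}u_{j}\,dS=\langle Tv_{i},u_{j}\rangle_{H}=\langle v_{i},Tu_{j}\rangle_{H}=\lambda_{j}^{-1}\langle v_{i},u_{j}\rangle_{H}.
\end{equation*}
The presence of the constant eigenfunction $u_{1}$ (with $\lambda_{1}=0$) is harmless: it merely forces $\int_{\partial\Omega}v_{i}\,dS=0$, i.e. $v_{i}\in H^{2,S}_{1}(\Omega)$, which is exactly the subspace on which the identity above was derived. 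Hence for $j=2,\ldots,k$ (where $\lambda_{j}^{-1}>0$) the constraint $\int_{\partial\Omega}v_{i}u_{j}\,dS=0$ is equivalent to $\langle v_{i},u_{j}\rangle_{H}=0$, and the admissible families are precisely the orthonormal systems of $H$ contained in the orthogonal complement $E_{k}^{\perp}$ of $\mathrm{span}\{u_{1},\ldots,u_{k}\}$, i.e. the span of the first $k$ eigenvectors of $T$.

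Finally I would invoke Ky Fan's maximum principle on $E_{k}^{\perp}$. The restriction $T|_{E_{k}^{\perp}}$ is again compact, self-adjoint and nonnegative, and its eigenvalues in decreasing order are exactly the remaining eigenvalues $\lambda_{k+1}^{-1}\geq\lambda_{k+2}^{-1}\geq\cdots$ of $T$. Ky Fan's theorem then yields
\begin{equation*}
\max\Big\{\sum_{l=k+1}^{k+n}\langle Tv_{l},v_{l}\rangle_{H}\Big\}=\sum_{l=k+1}^{k+n}\lambda_{l}^{-1},
\end{equation*}
the maximum being over orthonormal systems $\{v_{l}\}_{l=k+1}^{k+n}\subset E_{k}^{\perp}$; combined with the identity of the first paragraph this is the asserted formula. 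Moreover the maximum is attained exactly when $\mathrm{span}\{v_{k+1},\ldots,v_{k+n}\}$ is the top $n$-dimensional invariant subspace of $T|_{E_{k}^{\perp}}$, i.e. when the $v_{l}$ form an energy-orthonormal basis of $\mathrm{span}\{u_{k+1},\ldots,u_{k+n}\}$, which gives the attainment statement.

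I expect the only delicate point to be the upper bound in Ky Fan's inequality --- that no admissible family can beat the eigenvalue sum --- which requires expanding each $v_{l}$ in the spectral basis of $T$ and controlling the resulting doubly-indexed sum via the monotonicity of the $\lambda_{l}^{-1}$ together with the orthonormality constraints. This, along with the bookkeeping caused by the eigenvalue $\lambda_{1}=0$ being absent from the quotient, is where I would concentrate the care, following the argument of \cite[Theorem 1]{hx}.
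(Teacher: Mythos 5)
Your proposal is correct, but it takes a genuinely different route from the paper, which in fact offers no proof of Lemma \ref{lemma4-2} at all: it simply appeals to ``a similar argument to that of \cite[Theorem 1]{hx}'', i.e.\ the hands-on Hile--Xu variational argument, in which each admissible trial function is expanded against the eigenfunctions (which are orthogonal both in the energy form (\ref{5}) and in $L^{2}(\partial\Omega)$) and the resulting double sum is estimated using $\lambda_{k+1}\leq\lambda_{k+2}\leq\cdots$. You instead recognize the right-hand side as a Ky Fan trace maximum for the compact, nonnegative, self-adjoint operator $T^{S}_{1}$ constructed in Section \ref{sec2}, exploiting the identity $\langle T^{S}_{1}u,v\rangle_{H^{2}(\Omega)/\mathbb{R}}=\int_{\partial\Omega}\pi^{\sharp,S}_{1}[u]\,\pi^{\sharp,S}_{1}[v]\,dS$ from the proof of Theorem \ref{maintheorem-1}, together with the fact that the positive eigenvalues of $T^{S}_{1}$ are exactly the reciprocals $\lambda_{l}^{-1}$. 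Your treatment of the two delicate points is right: the constraint against the constant eigenfunction $u_{1}$ pins down the boundary-mean-zero representative, without which $\int_{\partial\Omega}v^{2}\,dS$ is not even a function of the class in $H^{2}(\Omega)/\mathbb{R}$ (adding a constant changes it while leaving the energy unchanged); and for $2\leq j\leq k$ the boundary orthogonality to $u_{j}$ is equivalent to energy orthogonality precisely because $\lambda_{j}^{-1}\neq0$, so the admissible families correspond exactly to orthonormal systems in the orthogonal complement of $\mathrm{span}\{p[u_{2}],\ldots,p[u_{k}]\}$ (note $u_{1}$ vanishes in the quotient, as you observe). What your route buys is reuse of the operator framework the paper has already built, so the only external input is Ky Fan's classical maximum principle, whose proof is the same spectral-expansion computation that Hile--Xu carry out explicitly; what the paper's cited route buys is an elementary, self-contained argument with no operator-theoretic packaging. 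If you write this up, do make the two-way correspondence between admissible families in $H^{2}(\Omega)$ and orthonormal systems in the complement explicit (lifting an orthonormal system back to its boundary-mean-zero representatives), so that the two maxima genuinely coincide, and record the attainment computation $\langle u_{l},u_{l}\rangle=\lambda_{l}\int_{\partial\Omega}u_{l}^{2}\,dS$ for energy-normalized eigenfunctions.
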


Now, we can prove:

\begin{theorem}  \label{theorem4-3}
For every domain $\Omega$ in $\mathbb{R}^n$ of class $C^1$, we have
\begin{eqnarray} \label{add-4-2}
\begin{split}
\lambda_{2}(\Omega)\leq\lambda_{2}(\Omega^*)\left(1-\delta_n\mathcal{A}(\Omega)^{2}\right),
\end{split}
\end{eqnarray}
where $\delta_n$ is given by
$$\delta_n:=\frac{n+1}{8n}(\sqrt[n]{2}-1),$$
and $\Omega^*$ is a ball with the same measure as $\Omega$.
\end{theorem}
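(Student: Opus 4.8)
The plan is to run a Weinberger-type argument: I will use the Cartesian coordinate functions $x_1,\dots,x_n$ as trial functions in the variational characterization of $\lambda_2$, handle their $n$-fold nature through the sum-of-reciprocals identity of Lemma~\ref{lemma4-2}, and then convert the resulting boundary moment into the Fraenkel asymmetry via the quantitative weighted isoperimetric inequality of Lemma~\ref{lemma4-1}. First I would translate $\Omega$ so that its boundary barycenter sits at the origin, i.e. $\int_{\partial\Omega}x_i\,dS=0$ for every $i$. Because the first eigenfunction $u_1$ of (\ref{1-4}) is constant, the orthogonality requirement $\int_{\partial\Omega}v_iu_1\,dS=0$ in Lemma~\ref{lemma4-2} (taken with $k=1$) reduces exactly to $\int_{\partial\Omega}v_i\,dS=0$, so each coordinate function becomes admissible. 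For $u=x_i$ one has $D^2x_i=0$, $\Delta x_i=0$ and $|\nabla x_i|^2=1$, whence the form (\ref{6}) evaluates to
\[
\int_{\Omega}\Big[(1-\sigma)|D^2x_i|^2+\sigma|\Delta x_i|^2+\tau|\nabla x_i|^2\Big]dx=\tau|\Omega|,
\]
with the mixed terms vanishing; thus $v_i:=x_i/\sqrt{\tau|\Omega|}$ form a family orthonormal for (\ref{6}). Feeding $\{v_i\}_{i=1}^n$ into the slots $l=2,\dots,n+1$ of Lemma~\ref{lemma4-2} gives
\[
\sum_{l=2}^{n+1}\frac{1}{\lambda_l(\Omega)}\ \ge\ \sum_{i=1}^n\int_{\partial\Omega}v_i^2\,dS=\frac{1}{\tau|\Omega|}\int_{\partial\Omega}|x|^2\,dS,
\]
and since $\lambda_2(\Omega)\le\lambda_l(\Omega)$ for $l\ge2$ the left side is at most $n/\lambda_2(\Omega)$, yielding
\[
\lambda_2(\Omega)\ \le\ \frac{n\tau|\Omega|}{\int_{\partial\Omega}|x|^2\,dS}.
\]

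Next I would identify this bound on the ball. By the $l=1$ case of Theorem~\ref{theorem3-1} the coefficient $B_1$ vanishes, so the radial part reduces to $A_1r$ and the $\lambda_2$-eigenfunctions on $\Omega^{\ast}$ are exactly the coordinate functions (Theorem~\ref{theorem3-2}); hence the Rayleigh quotient above is attained with equality there. Using $|\Omega^{\ast}|=|\Omega|$ and the symmetry $\int_{\partial\Omega^{\ast}}x_i^2\,dS=\frac1n\int_{\partial\Omega^{\ast}}|x|^2\,dS$, this reads
\[
\lambda_2(\Omega^{\ast})=\frac{n\tau|\Omega|}{\int_{\partial\Omega^{\ast}}|x|^2\,dS}.
\]
Dividing the two displays eliminates $n\tau|\Omega|$ and leaves
\[
\frac{\lambda_2(\Omega)}{\lambda_2(\Omega^{\ast})}\ \le\ \frac{\int_{\partial\Omega^{\ast}}|x|^2\,dS}{\int_{\partial\Omega}|x|^2\,dS}.
\]

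To finish, I would apply Lemma~\ref{lemma4-1} with $p=2$, for which a direct computation gives $c_{n,2}=\frac{n+1}{4n}(\sqrt[n]{2}-1)=2\delta_n$. Since $\Omega^{\ast}$ is one of the competitor balls in the infimum defining $\mathcal{A}(\Omega)$, one has $|\Omega\Delta\Omega^{\ast}|/|\Omega|\ge\mathcal{A}(\Omega)$, so Lemma~\ref{lemma4-1} upgrades to
\[
\int_{\partial\Omega}|x|^2\,dS\ \ge\ \Big(1+2\delta_n\,\mathcal{A}(\Omega)^2\Big)\int_{\partial\Omega^{\ast}}|x|^2\,dS,
\]
and therefore the ratio above is at most $(1+2\delta_n\mathcal{A}(\Omega)^2)^{-1}$. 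Closing with the elementary bound $\tfrac{1}{1+y}\le1-\tfrac{y}{2}$ for $0\le y\le1$, applied to $y=2\delta_n\mathcal{A}(\Omega)^2$, then gives $(1+2\delta_n\mathcal{A}(\Omega)^2)^{-1}\le1-\delta_n\mathcal{A}(\Omega)^2$, which is precisely (\ref{add-4-2}).

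I expect the difficulty to be organizational rather than conceptual. The delicate points are: verifying that the single centering $\int_{\partial\Omega}x_i\,dS=0$ simultaneously makes the $x_i$ admissible for Lemma~\ref{lemma4-2} and is compatible with the origin-centered ball $\Omega^{\ast}$ of Lemma~\ref{lemma4-1}; confirming that on $\Omega^{\ast}$ the coordinate functions are genuine eigenfunctions so that the quotient is an equality (this is where $B_1=0$ enters); and checking that the elementary reciprocal inequality is applicable, i.e. that $y=2\delta_n\mathcal{A}(\Omega)^2<1$. The last holds because $\mathcal{A}(\Omega)<2$ while $c_{n,2}=\frac{n+1}{4n}(\sqrt[n]{2}-1)<\frac14$ for all $n\ge2$ (both factors being decreasing in $n$, with maximum at $n=2$), so that $y=c_{n,2}\,\mathcal{A}(\Omega)^2<1$; this is exactly the constraint that forces the stated identity $\delta_n=c_{n,2}/2$.
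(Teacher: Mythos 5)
Your argument reproduces the paper's own proof in its essentials: the same trial functions $x_i/\sqrt{\tau|\Omega|}$ centered so that they have zero mean on $\partial\Omega$, the same use of Lemma \ref{lemma4-2} with $k=1$, the same application of Lemma \ref{lemma4-1} with $p=2$ (with the same bookkeeping $c_{n,2}=\frac{n+1}{4n}(\sqrt[n]{2}-1)=2\delta_n$), and the same elementary step $\frac{1}{1+y}\le 1-\frac{y}{2}$ for $0\le y\le 1$, whose applicability you verify correctly. Everything up to and including the bound $\lambda_2(\Omega)\le n\tau|\Omega|\big/\int_{\partial\Omega}|x|^2\,dS$ and the conversion of $|\Omega\Delta\Omega^*|/|\Omega|$ into $\mathcal{A}(\Omega)$ matches the paper's Step 1.

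The one genuine gap is the identity $\lambda_2(\Omega^*)=n\tau|\Omega^*|\big/\int_{\partial\Omega^*}|x|^2\,dS$, which you justify by citing Theorem \ref{theorem3-2}. That theorem (via Theorem \ref{theorem3-1}) is proved only for the \emph{unit} ball, whereas $\Omega^*$ has whatever radius is dictated by $|\Omega^*|=|\Omega|$. The point is not cosmetic: problem (\ref{1-4}) is not dilation-invariant because of the tension parameter, and under $x\mapsto sx$ the pair $(\tau,\Omega)$ transforms to $(s^{-2}\tau,\,s\Omega)$ with $\lambda(\tau,\sigma,\Omega)=s^{3}\lambda(s^{-2}\tau,\sigma,s\Omega)$. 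This is exactly why the paper has a Step 2: it proves the inequality first when $|\Omega|=|\mathbf{B}_1|$, where Theorem \ref{theorem3-2} applies verbatim, and then transports it to arbitrary volumes by the scaling identity, using that Step 1 holds for every $\tau>0$ and that $\mathcal{A}$ is scale-invariant. Your write-up can be repaired the same way: either insert that scaling computation, or deduce from Theorem \ref{theorem3-2} (valid for all $\tau>0$) together with the scaling identity that $\lambda_2(\tau,\sigma,B_R)=\tau/R$ with the coordinate functions as eigenfunctions. Note that merely checking that $x_i$ solves the boundary value problem on $B_R$ with eigenvalue $\tau/R$ (the ``$B_1=0$'' computation you allude to) is not enough — you also need that no smaller positive eigenvalue exists on $B_R$, and that is precisely what does not follow from the unit-ball case without the scaling relation. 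With that single insertion your proof is complete and essentially identical to the paper's.
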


\begin{proof}
We divide our proof into two steps:

\textbf{Step 1}. Assume first that $\Omega$ is a bounded domain of
class $C^1$ in $\mathbb{R}^n$ with the same measure as the unit
Euclidean ball $\mathbf{B}_{1}$. Let trial functions be defined by
$v_l=(\tau|\Omega|)^{-\frac{1}{2}}x_l$, with $l=2,\ldots,n+1$, such
that they have zero integral mean over $\partial\Omega$. This can be
always assured. In fact, by coordinate transformation
$x=y-\frac{1}{|\partial\Omega|}\int_{\partial\Omega}ydS$, it is easy
to know that the trial functions have zero integral mean over
$\partial\Omega$. Moreover, functions $v_l$ also satisfy the
normalization condition of Lemma \ref{lemma4-2}. Choosing $k=1$,
$l=2$, $p=2$, and applying Lemma \ref{lemma4-1}, we can obtain
\begin{eqnarray} \label{add-4-3}
\begin{split}
\sum_{l=2}^{n+1}\frac{1}{\lambda_l(\Omega)}&\geq\frac{1}{\tau|\Omega|}\int_{\partial\Omega}|x|^2dS\nonumber\\
&\geq\frac{1}{\tau|\Omega|}\left(1+c_{n,2}\left(\frac{|\Omega\Delta B|}{|\Omega|}\right)^2\right)\int_{\partial B}|x|^2dS\nonumber\\
&\geq\sum_{l=2}^{n+1}\frac{1}{\lambda_l(B)}\left(1+c_{n,2}\left(\frac{|\Omega\Delta
B|}{|\Omega|}\right)^2\right).
\end{split}
\end{eqnarray}
Recall that $\lambda_{2}(\Omega)\leq\lambda_{l}(\Omega)$ for $l\geq
3$. Then, combing (\ref{add-4-3}) and the definition of
$\mathcal{A}(\Omega)$, one has
 \begin{eqnarray*}
\lambda_{2}(\Omega)(1+c_{n,2}\mathcal{A}(\Omega)^2)\leq\lambda_{2}(\mathbf{B}_{1}).
 \end{eqnarray*}
Clearly, the above inequality implies the eigenvalue inequality with
$\delta_n=\frac{1}{8}\min\{1,\frac{n+1}{n}(\sqrt[n]{2}-1)\}=\frac{n+1}{8n}(\sqrt[n]{2}-1)$.
So far, this step deals with the case that
$|\Omega|=|\mathbf{B}_{1}|$ completely and successfully.

\textbf{Step 2}. This step deals with the proof for general finite
values of $|\Omega|$, which relies on the scaling properties of the
eigenvalues.  For all $s>0$, noting
\begin{eqnarray*}
s\Omega:=\left\{x\in\mathbb{R}^{n}\Bigg{|}\frac{x}{s}\in\Omega\right\}.
 \end{eqnarray*}
Let $\lambda(\tau,\sigma,\Omega)$ be an eigenvalue of the eigenvalue
problem (\ref{1-4}). For any $u\in H^2(\Omega)$ with $\int_\Omega
udx=0$, let $ \widetilde{u}(x)$ be a valid trial function on
$s\Omega$ that makes $ \widetilde{u}(x) = u(\frac{x}{s})$, and then
the Rayleigh quotient
$\mathcal{Q}_{s^{-2}\tau,\sigma,s\Omega}[\widetilde{u}]$ can be
treated as follows:
\begin{eqnarray*}
\begin{split}
\mathcal{Q}_{s^{-2}\tau,\sigma,s\Omega}[\widetilde{u}]=&\frac{\int_{s\Omega}\left[(1-\sigma)|D^2\widetilde{u}|^2+\sigma|\Delta\widetilde{u}|^2+s^{-2}\tau|\nabla\widetilde{u}|^2\right]dx}
{\int_{\partial s\Omega}\widetilde{u}^2dS}\\
=&\frac{\int_{s\Omega}\left[(1-\sigma)|s^{-2}D^2u(\frac{x}{s})|^2+\sigma|s^{-2}\Delta
u(\frac{x}{u})|^2+s^{-2}\tau|s^{-1}\nabla
u(\frac{x}{u})|^2\right]dx}
{\int_{\partial s\Omega}u(\frac{x}{s})^2dS}\\
(\mathrm{taking}~ y=\frac{x}{s}) \
=&s^{-4+n}\frac{\int_{\Omega}\left[(1-\sigma)|D^2u|^2+\sigma|\Delta
u|^2+\tau|\nabla u|^{2}\right]dy}
{s^{n-1}{\int_{\partial\Omega}u^2d\sigma}}\\
=&s^{-3}\mathcal{Q}_{\tau,\sigma,\Omega}[u],
\end{split}
\end{eqnarray*}
which implies
\begin{eqnarray*}
\lambda(\tau,\sigma,\Omega)=s^3\lambda(s^{-2}\tau,\sigma,s\Omega).
\end{eqnarray*}
Together this scaling property with the conclusion in \textbf{Step
1}, we know that the eigenvalue inequality (\ref{add-4-2}) also
holds for general bounded domains of class $C^1$ in $\mathbb{R}^n$.
This completes the proof of Theorem \ref{theorem4-3}.
\end{proof}

\begin{proof} [Proof of Theorem \ref{maintheorem}]
The conclusion of Theorem \ref{maintheorem} follows directly by
applying Theorem \ref{theorem4-3} and the fact that
$\mathcal{A}(\Omega^{\ast})=0$.
\end{proof}

\section{Further study}
\renewcommand{\thesection}{\arabic{section}}
\renewcommand{\theequation}{\thesection.\arabic{equation}}
\setcounter{equation}{0}

Theorem \ref{maintheorem} tells us that for the functional
$\lambda_{2}(\cdot):\Omega\mapsto\mathbb{R}^{+}$ with $|\Omega|$
fixed, $\Omega^{\ast}$ is a maximum value of $\lambda_{2}(\cdot)$. A
natural question is:

$\\$\textbf{Open problem}. \emph{Is $\Omega^{\ast}$ the only maximum
value of the functional $\lambda_{2}(\cdot)$ defined above? Can we
get the rigidity from the isoperimetric inequality
$\lambda_{2}(\Omega)\leq\lambda_{2}(\Omega^{\ast})$ with $|\Omega|$
fixed? Is the claim ``if the equality in Theorem \ref{maintheorem}
holds, then $\Omega$ must be $\Omega^{\ast}$." true?}

$\\$ We believe that the answer to the above \textbf{Open problem}
is positive. However, so far, we do not know how to prove it. We
have tried the approach shown in \cite[Subsection 4.1]{bp} to
explain that $\Omega^{\ast}$ would be the unique critical point of
the functional $\lambda_{2}(\cdot)$, but we fail.
 Clearly, we definitely hope that this difficulty can be
overcome in the future.

\section*{Acknowledgments}
\renewcommand{\thesection}{\arabic{section}}
\renewcommand{\theequation}{\thesection.\arabic{equation}}
\setcounter{equation}{0} \setcounter{maintheorem}{0}

This work is partially supported by the NSF of China (Grant Nos.
11801496 and 11926352), the Fok Ying-Tung Education Foundation
(China) and  Hubei Key Laboratory of Applied Mathematics (Hubei
University).

\end{document}